\newtheorem{thm}{Theorem}
\newtheorem{rmk}{Remark}
\newtheorem{lem}{Lemma}
\newtheorem{cor}{Corollary}
\begin{document}

\noindent \textbf{\large Uniform convergence of compactly supported wavelet expansions\\ of Gaussian random processes\footnotetext{Submitted: \today}}\\

\noindent {\it Short title:} \textbf{ Uniform convergence of wavelet expansions}

\vskip 1cm
\noindent \textbf{\large Yuriy Kozachenko$^a$, Andriy Olenko$^b$\footnote{Address correspondence to Andriy Olenko, Department of Mathematics and Statistics, La Trobe University, Victoria 3086, Australia; E-mail:  a.olenko@latrobe.edu.au} and Olga Polosmak$^a$}

\vskip 5mm
\noindent {$^a$ Department of Probability Theory, Statistics and Actuarial Mathematics, Kyiv University, Kyiv, Ukraine}

\noindent {$^b$ Department of Mathematics and Statistics, La Trobe University, Melbourne, Australia}

\vskip 1cm

\begin{center}
{\large \it Dedicated to Professor Narayanaswamy Balakrishnan on the occasion of\\ the celebration of his 30 years of contributions to statistics.}
\end{center}

\vskip 1cm

\noindent {\bf Key Words:}  convergence in probability; Gaussian process; random process; uniform convergence; convergence rate; wavelets; compactly supported wavelets
\vskip 3mm

\vskip 3mm
\noindent {\bf Mathematics Subject Classification:} 60G10; 60G15; 42C40
\vskip 6mm

\noindent {\bf ABSTRACT}

\noindent \textit{New results on uniform convergence in probability for expansions of Gaussian random processes using compactly supported wavelets  are given. The  main result is valid for general classes of nonstationary processes. An application of the obtained results to stationary processes is also presented. It is shown that the convergence rate of the expansions is exponential.}
\vskip 4mm

\section{Introduction}
In the recent literature a considerable attention was given to wavelet orthonormal series representations of
stochastic processes. Numerous results, applications, and references on convergence of wavelet expansions of random processes in various spaces  can be found in \cite{cam, did, ist, kozol1, kozol2, kur, zha}. While most known results concern the mean-square convergence, for various practical applications one needs to require uniform convergence.

Figures~1 and 2 illustrate wavelet expansions of stochastic processes. Figure~1 presents simulated realizations of a Gaussian process. Contrary to the deterministic case we obtain a new realization and new corresponding reconstruction errors for each simulation. Figure~2 shows a simulated realization of the process and its two wavelet reconstructions with different numbers of terms (see more details in Section~\ref{sec7}). Figure~2 suggests that reconstruction errors become smaller when the number of terms in the expansions increases. Although this effect is well-known for deterministic functions, it has to be established theoretically for different stochastic processes and probability metrics. 
\noindent\begin{figure}[h]
\begin{minipage}{7.5cm}
 \includegraphics[width= 7.5cm,height=6.1cm,trim=0cm 0.5cm 0.5cm 2cm, clip=true]{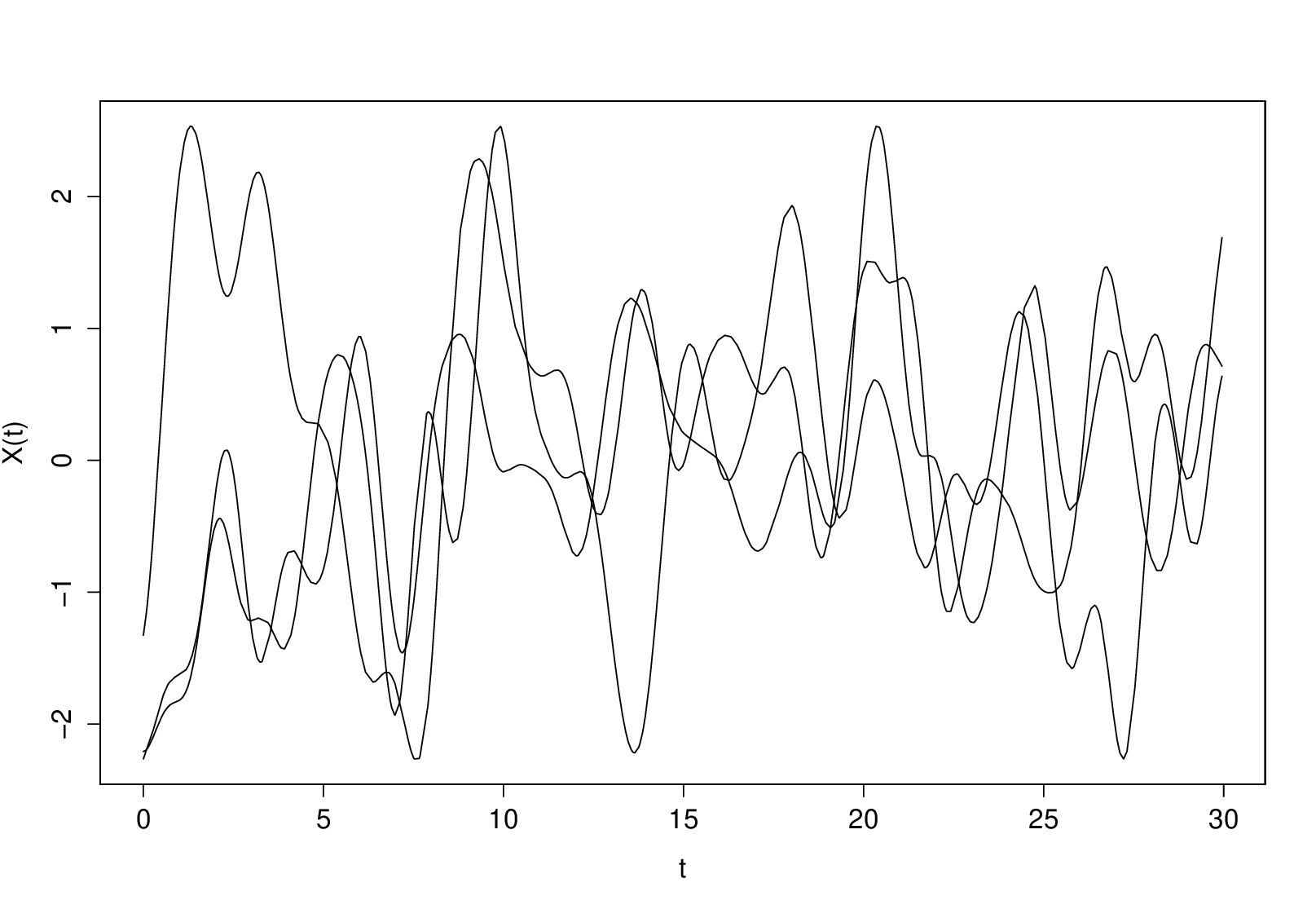}\\
 {{}\vspace{5mm}\textbf{Fig.\,1}\  \small Three realizations of Gaussian process}  \end{minipage}\quad\quad
\begin{minipage}{7.5cm}
 \includegraphics[width= 7.5cm,trim=0cm 0.5cm 0.5cm 2cm, clip=true]{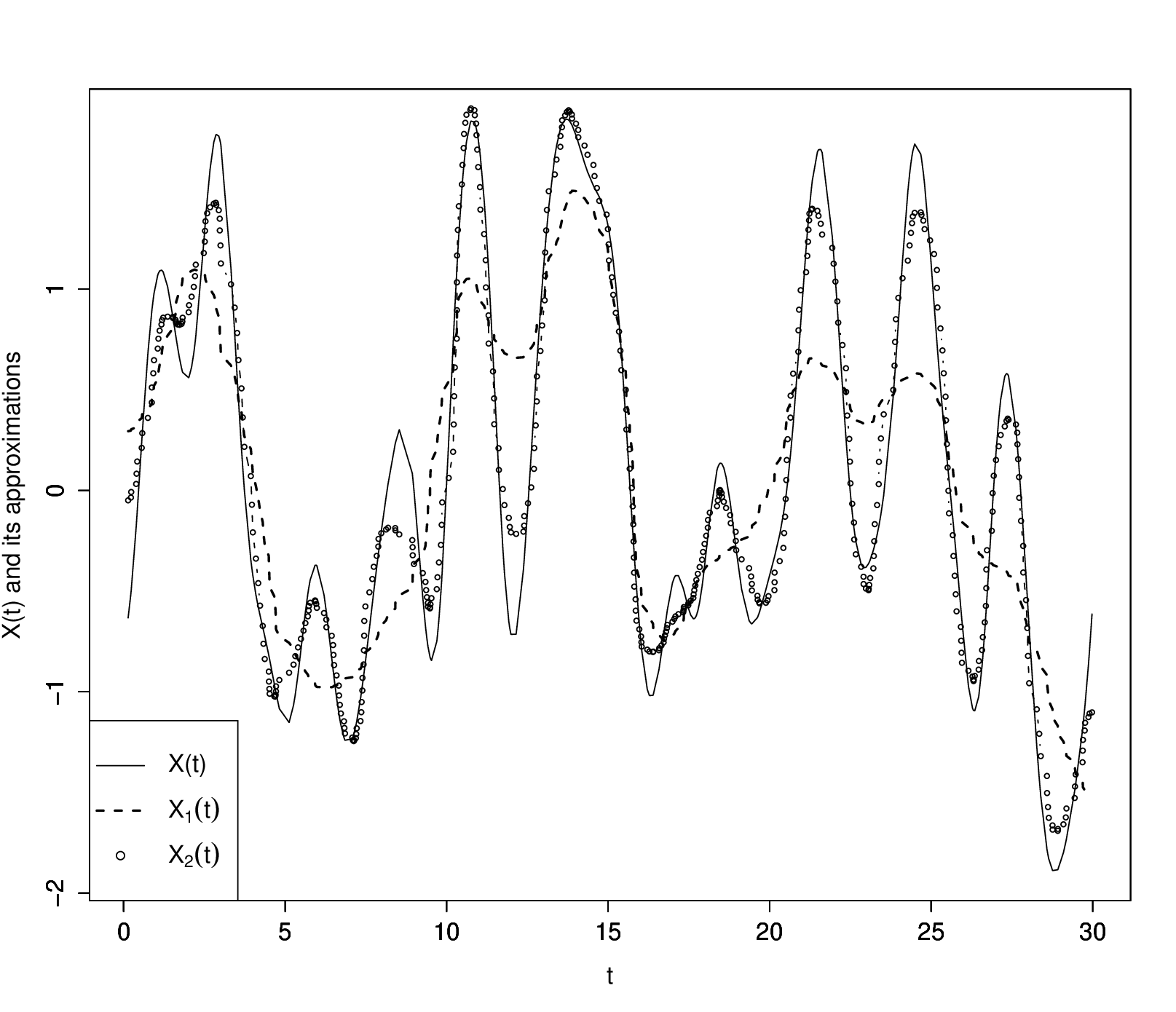}\\
{\textbf{Fig.\,2}\  \small  Gaussian process and its two wavelet reconstructions} \end{minipage}
\end{figure}

In the paper we derive new results on stochastic uniform convergence of wavelet expansions for wide classes of stochastic processes.  Throughout the paper, we assume that the father wavelets have compact supports. It imposes minimal additional conditions on wavelet bases, which can be easily verified. The assumptions  are weaker than those in the former literature,  compare \cite{kozol1, kozol2, kur, wong}. It is easy to verify that numerous wavelets satisfy these conditions, for example, the well known Daubechies, symmlet, and coiflet wavelet bases, see \cite{har}.  We also prove the exponential rapidity of convergence of the wavelet expansions. The  main result is valid for general classes of nonstationary processes. An application of the results to a particular case of stationary processes is also demonstrated.

The obtained results may have various practical applications for the approximation and simulation of random processes, see \cite{bal1,bal2}. Compactly supported wavelet expansions are preferable for practical applications, because, contrary to theoretical results with arbitrary wavelet bases, these expansions have the finite number of terms.  The analysis of the rate of convergence provides a constructive algorithm for determining the number of terms in the wavelet expansions to ensure the approximation of stochastic processes with given accuracy.

The analysis and the approach presented in the paper are new and contribute to the investigations of wavelet expansions of random processes in the former literature.

The organization of the article is the following. In the second section we introduce the necessary background from wavelet theory of non-random functions and discuss various properties of compactly supported wavelets.  In Section 3 we prove mean-square convergence of compactly supported wavelet expansions for the case of stochastic processes. In \S 4 we formulate the main theorem on uniform convergence in probability of compactly supported wavelet expansions of Gaussian random processes. The next section contains the proof of the main theorem. In \S 6 we  obtain the rate of convergence of the wavelet expansions. Finally, an application of the developed technique is shown in section 7.

\section{Wavelet representation of deterministic functions}
In this section we give some basic properties of wavelet expansions of non-random functions. These facts will be used to derive results for stochastic processes in the following sections.

Let $\phi(x),$ $x\in\mathbb R$ be a function from the space
$L_2(\mathbb R)$ such that $\widehat{\phi}(0)\ne 0$ and
$\widehat{\phi}(y)$ is continuous at $0,$ where
$\widehat{\phi}(y)=\int_{\mathbb
R}e^{-iyx}{\phi(x)}\,dx$ is the Fourier transform of $\phi.$

 Suppose that the following assumption
holds true: $\sum_{k\in\mathbb Z} |\widehat{\phi}(y+2{\pi}k)|^2=1\  {\rm (a.e.)}
$

There exists a function $m_0(x)\in L_2([0,2\pi])$, such that
$m_0(x)$ has the period $2\pi$ and
$$\widehat{\phi}(y)=m_0\left(y/2\right)\widehat{\phi}\left(y/2\right)\ {\rm (a.e.)}
$$
In this case the  function $\phi(x)$ is called
the $f$-wavelet.

Let $\psi(x)$ be the inverse Fourier transform of the function
$$\widehat{\psi}(y)=\overline{m_0\left(\pi+
y/2\right)}\cdot\exp\left(-iy/2\right)\cdot\widehat{\phi}\left(y/2\right).$$ Then the
function
$$\psi(x)=\frac1{2\pi}\int_{\mathbb
R}e^{iyx}{\widehat{\psi}(y)}\,dy$$ is called the $m$-wavelet.

Let
\begin{equation}\label{1}\phi_{jk}(x)=2^{j/2}\phi(2^jx-k),\quad
\psi_{jk}(x)=2^{j/2}\psi(2^jx-k),\quad j,k \in\mathbb Z\,.
\end{equation}
It is known that the family of functions $\{\phi_{0k},
\psi_{jk}:\,j\in \mathbb N_0,\ k\in \mathbb Z\},$ $\mathbb N_0:=\{0,1,2,...\},$ is an orthonormal basis in
$L_2(\mathbb R)$ (see, for example, \cite{chu,dau}).

An arbitrary function $f(x)\in L_2(\mathbb R)$ can be represented in
the form
\begin{equation}\label{2.5}f(x)=\sum_{k\in\mathbb Z}\alpha_{0k}\phi_{0k}(x)+\sum_{j=0}^{\infty}\sum_{k\in\mathbb Z}\beta_{jk}\psi_{jk}(x)\,,
\end{equation}
$$\alpha_{0k}=\int_{\mathbb R}f(x)\overline{\phi_{0k}(x)}\,dx,\quad \beta_{jk}=\int_{\mathbb R}f(x)\overline{\psi_{jk}(x)}\,dx.$$
The representation (\ref{2.5}) is called a wavelet representation.

The series~(\ref{2.5}) converges in the space $L_2(\mathbb R),$ i.e.
$\sum_{k\in\mathbb
Z}|\alpha_{0k}|^2+\sum_{j=0}^{\infty}\sum_{k\in\mathbb
Z}|\beta_{jk}|^2<\infty\,.$

\

\noindent {\bf Assumption S.} \cite{har}  For the $f$-wavelet $\phi$
there exists a decreasing function $\Phi(x),$ $x\ge 0,$ such that
$\Phi(0)<\infty,$ $|\phi(x)|\le
\Phi(|x|)$ (a.e.) and $\int_{\mathbb R}\Phi(|x|)\,dx<\infty\,.$

\begin{thm}\label{KozPerNerPsi} {\rm\cite{kozperUMJ}} If the assumption {\rm S}  holds  for the $f$-wavelet
$\phi$ and a function $\Phi(\cdot),$ then there exists such $B>0$ that for all $x\in \mathbb R:$
$$
 |\psi(x)|\le B \cdot \Phi \left(\left|
  \frac{2x-1}4\right| \right).$$  
\end{thm}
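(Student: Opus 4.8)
The plan is to reduce everything to a pointwise refinement (two-scale) identity that expresses $\psi$ as a series in integer translates of $\phi$, and then to transport the decay of $\phi$ (encoded by $\Phi$) through that series. First I would expand the $2\pi$-periodic factor in a Fourier series, $m_0(x)=\sum_{n\in\mathbb Z}a_n e^{-inx}$. Substituting the hypothesis $\widehat{\phi}(y)=m_0(y/2)\widehat{\phi}(y/2)$ and inverting the Fourier transform yields the scaling relation $\phi(x)=2\sum_n a_n\phi(2x-n)$; pairing this with $\phi(2x-m)$ and using that the integer translates of $\phi$ are orthonormal (which is exactly the assumption $\sum_k|\widehat{\phi}(y+2\pi k)|^2=1$) identifies the coefficients as $a_m=\int_{\mathbb R}\phi(t)\overline{\phi(2t-m)}\,dt$. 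Performing the same substitution in the definition $\widehat{\psi}(y)=\overline{m_0(\pi+y/2)}\,e^{-iy/2}\,\widehat{\phi}(y/2)$ and inverting termwise (legitimate once $\sum_n|a_n|<\infty$ is checked, with $\phi\in L_1\cap L_2$ since $|\phi|\le\Phi$ is integrable) gives the key representation
\[
\psi(x)=2\sum_{n\in\mathbb Z}(-1)^n\,\overline{a_n}\,\phi(2x+n-1).
\]

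Second, I would estimate the coefficients. From $|\phi|\le\Phi$ one gets $|a_n|\le\int_{\mathbb R}\Phi(|t|)\Phi(|2t-n|)\,dt$; splitting this integral at $|t|=|n|/3$ and using that $\Phi$ is decreasing together with $\int_{\mathbb R}\Phi<\infty$ produces the decay $|a_n|\le C\,\Phi(|n|/3)$, and in particular $\sum_n|a_n|<\infty$, since a decreasing integrable function is summable along the integers.

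Third, and this is where the constant $1/4$ is produced, I would collapse the series. Writing $z=(2x-1)/4$, so that $2x+n-1=4z+n$, the representation gives $|\psi(x)|\le 2\sum_n|a_n|\,\Phi(|4z+n|)$. I split the sum at $|n|=3|z|$: on $|n|\le 3|z|$ one has $|4z+n|\ge|z|$, hence $\Phi(|4z+n|)\le\Phi(|z|)$ and this part is $\le 2\Phi(|z|)\sum_n|a_n|$; on $|n|>3|z|$ one has $|a_n|\le C\Phi(|n|/3)\le C\Phi(|z|)$, while $\sum_n\Phi(|4z+n|)$ is bounded by a fixed finite constant. Both pieces are therefore bounded by a constant times $\Phi(|z|)=\Phi(|(2x-1)/4|)$, which is the asserted inequality. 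The bounded range of $|z|$, where the threshold $3|z|$ is too small to carry information, is handled separately: there $\psi$ is bounded by $2\Phi(0)\sum_n|a_n|$ while $\Phi(|z|)$ is bounded below, so the estimate holds after enlarging $B$.

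The main obstacle is the matched bookkeeping in the second and third steps: the exponent $1/4$ is not arbitrary but is forced by balancing the integral split (which yields coefficient decay $\Phi(|n|/3)$) against the factor $4$ sitting in front of $z$ in $4z+n$ and against the sum split at $\lambda|z|$, the balance $4-\lambda=\lambda/3$ giving precisely $\lambda=3$ and hence the argument $|z|$ in both pieces. Keeping every monotonicity inequality pointed the correct way, and verifying the degenerate situation in which $\Phi$ vanishes on a half-line (so that $\psi$ is compactly supported), is the delicate part; by contrast, the Fourier-analytic derivation of the refinement relation in the first step is entirely standard.
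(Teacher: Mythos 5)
The paper never proves this theorem: it is imported verbatim from \cite{kozperUMJ} and used as a black box, so there is no in-paper argument to compare yours against. Judged on its own, your proof is essentially correct and is the natural self-contained route. The two-scale representation $\psi(x)=2\sum_{n\in\mathbb Z}(-1)^n\overline{a_n}\,\phi(2x+n-1)$, the coefficient decay $|a_n|\le C\,\Phi(|n|/3)$ from splitting $\int_{\mathbb R}\Phi(|t|)\Phi(|2t-n|)\,dt$ at $|t|=|n|/3$, and the final split of the sum at $|n|=3|z|$ with $z=(2x-1)/4$ fit together exactly as you describe; the balance $4-\lambda=\lambda/3$ is indeed what forces the argument $(2x-1)/4$. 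Two technical points you flag are handled in the right order and without circularity: the identification $a_m=\int_{\mathbb R}\phi(t)\overline{\phi(2t-m)}\,dt$ uses orthonormality of the translates $\{\sqrt2\,\phi(2\cdot-n)\}$, which is equivalent to the standing hypothesis $\sum_k|\widehat\phi(y+2\pi k)|^2=1$, and the series identity for $\psi$ holds first in $L_2$ (where only $\sum_n|a_n|^2<\infty$ is needed) and then pointwise a.e.\ once the $\ell_1$ bound on the coefficients is established.

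One step is wrong as written, though fortunately it is also unnecessary: your fallback for ``the bounded range of $|z|$,'' where you argue that $\Phi(|z|)$ is bounded below there and so $B$ can be enlarged. Assumption S allows $\Phi$ to vanish on a half-line, and this degenerate case is precisely the one this paper cares about (Corollary~\ref{KozPerCompPsi} takes $\Phi_\phi$ vanishing off $[-a,a]$); then $\inf_{|z|\le z_0}\Phi(|z|)$ can equal $0$ and no enlargement of $B$ rescues that argument. But your main two-case split already covers every $z$ uniformly, small $|z|$ included: for $|n|\le 3|z|$ one has $|4z+n|\ge 4|z|-|n|\ge|z|$, hence $\Phi(|4z+n|)\le\Phi(|z|)$, while for $|n|>3|z|$ one has $|a_n|\le C\Phi(|n|/3)\le C\Phi(|z|)$ and $\sum_{n}\Phi(|4z+n|)$ is bounded by a constant independent of $z$ (at most one point of the unit-spaced lattice $\{4z+n\}$ lies within $1/2$ of the origin, and the remaining terms are dominated by multiples of $\int_0^\infty\Phi(s)\,ds$). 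So simply delete the separate bounded-$|z|$ case; the conclusion $|\psi(x)|\le B\,\Phi\left(\left|\frac{2x-1}{4}\right|\right)$ then holds uniformly, almost everywhere in $x$ --- which is all one can assert anyway, since $|\phi|\le\Phi$ is itself only an a.e.\ hypothesis and $\psi$ is defined as an $L_2$ inverse Fourier transform.
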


\begin{cor}\label{KozPerCompPsi} If the $f$-wavelet
$\phi$ is a bounded function with a compact support, then
the corresponding $m$-wavelet $\psi$ is also bounded and has a compact support.
\end{cor}

\begin{proof} Consider $f$-wavelet $\phi$ which support is in the interval $[-a, a].$ Then the assumption {\rm S}  holds true for the
$f$-wavelet $\phi$  with the function
$$\Phi_\phi(x):=    \begin{cases}\sup_{x \in [-a, a]}|\phi(x)|, & x \in [-a, a],\\
0, & x \not\in [-a, a].
\end{cases}$$
Using Theorem~\ref{KozPerNerPsi} we obtain
$ |\psi(x)|\le B \cdot \Phi_\phi\left(\left|{2x-1}\right|/4 \right),$ $x\in \mathbb R.$
By this inequality $\psi(x)=0,$ when $x \not\in \left[\frac{1-4a}2,
\frac{1+4a}2\right].$ Hence, $\psi$ also has a compact support.
\end{proof}

\begin{rmk}\label{rema1} If the $f$-wavelet
$\phi$ is a bounded function with a compact support in $[-a, a],$
then the assumption {\rm S}  holds true for the $m$-wavelet $\psi$ with a certain function
$\Phi_\psi(\cdot),$ such that for some $\hat a \ge 0:$ $\Phi_\psi(x)=0,$ when $x
\not\in \left[-\hat a, \hat a\right].$ As an example of such function
we can take
$$\Phi_\psi(x):=
\begin{cases}
\sup_{x \in [-\hat a, \hat a]}|\psi(x)|, & x \in [-\hat a, \hat a],\\
0, & x \not\in [-\hat a, \hat a],
\end{cases},\quad \hat a=\frac{1+4a}2.$$
\end{rmk}

\begin{thm}\label{UnifConvFunc} Let the assumption {\rm S}  hold true for the $f$-wavelet
$\phi$ with a function $\Phi(\cdot).$
Let $f(x),$ $x\in\mathbb R,$ be a measurable continuous on
$(a, b),$ $a,b\in\mathbb R,$ function. Suppose that  $|f(x)|\le
c(x), \ x\in \mathbb R,$ where $c(\cdot)$ is an even increasing on
$[0,\infty)$  function such that $c(x)>1$ and
\begin{equation}\label{intcfi}\int_{\mathbb
R}c(ax)\Phi(|x|)\,dx<\infty,\ \mbox{for all}\ a>0.\end{equation}

Then
\begin{equation}\label{fmtof}f_n(x)=\sum_{k\in\mathbb
Z}\alpha_{0k}\phi_{0k}(x)+\sum_{j=0}^{n-1}\sum_{k\in\mathbb
Z}\beta_{jk}\psi_{jk}(x)\to f(x),\ n\to\infty,\end{equation}
uniformly on each interval $ [\alpha, \ \beta]\subset(a, b).$ 
\end{thm}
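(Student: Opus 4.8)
The plan is to realise the partial sum $f_n$ as the orthogonal projection of $f$ onto the resolution space $V_n=\overline{\mathrm{span}}\{\phi_{nk}:k\in\mathbb Z\}$ and to treat this projection as an approximate identity. First I would use the multiresolution decomposition $V_n=V_0\oplus W_0\oplus\cdots\oplus W_{n-1}$, where $W_j=\overline{\mathrm{span}}\{\psi_{jk}\}$, to rewrite (\ref{fmtof}) as $f_n(x)=\sum_{k\in\mathbb Z}\langle f,\phi_{nk}\rangle\,\phi_{nk}(x)=\int_{\mathbb R}q_n(x,y)f(y)\,dy$, where the kernel has the self-similar form $q_n(x,y)=2^nq(2^nx,2^ny)$ with $q(u,v)=\sum_{k\in\mathbb Z}\phi(u-k)\overline{\phi(v-k)}$. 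This reduces the problem to showing that the family $\{q_n\}$ is an approximate identity, uniformly for $x$ in a compact set.

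Next I would record the two kernel properties that drive the argument. The first is the reproducing (partition-of-unity) identity $\int_{\mathbb R}q(u,v)\,dv=1$ for every $u$, a standard consequence of the wavelet axioms ($\{\phi(\cdot-k)\}$ orthonormal, $\widehat\phi(2\pi k)=0$ for $k\neq0$ and $|\widehat\phi(0)|=1$); it yields the clean error representation
\[
f_n(x)-f(x)=\int_{\mathbb R}q_n(x,y)\bigl(f(y)-f(x)\bigr)\,dy .
\]
The second is a uniform $L_1$ bound: by Assumption S, $|q(u,v)|\le\sum_k\Phi(|u-k|)\Phi(|v-k|)$, so Tonelli gives $\int_{\mathbb R}|q(u,v)|\,dv\le\bigl(\sum_k\Phi(|u-k|)\bigr)\int_{\mathbb R}\Phi(|t|)\,dt\le\bigl(\Phi(0)+\int_{\mathbb R}\Phi\bigr)\int_{\mathbb R}\Phi=:M<\infty$ uniformly in $u$, since $s\mapsto\Phi(|u-s|)$ is unimodal with maximum $\Phi(0)$ and $\Phi$ is integrable.

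Then I would run the standard near/far split. Fix $\varepsilon>0$ and choose $\delta_0>0$ with $[\alpha-\delta_0,\beta+\delta_0]\subset(a,b)$; by uniform continuity of $f$ on this compact set there is $\delta\in(0,\delta_0]$ with $|f(y)-f(x)|<\varepsilon$ whenever $x\in[\alpha,\beta]$ and $|y-x|\le\delta$. On the near region $|y-x|\le\delta$ the error is bounded by $\varepsilon\int_{\mathbb R}|q_n(x,y)|\,dy\le\varepsilon M$. On the far region $|y-x|>\delta$ I would use $|f(y)-f(x)|\le c(y)+c(x)\le c(y)+c(\beta)$; the $c(\beta)$ term is controlled by the tail mass $\int_{|y-x|>\delta}|q_n(x,y)|\,dy=\int_{|v-u|>2^n\delta}|q(u,v)|\,dv$, which tends to $0$ as $n\to\infty$ uniformly in $u$ by the decay and integrability of $\Phi$ (the far region is pushed out to $|v-u|>2^n\delta\to\infty$).

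The main obstacle is the remaining far-region term $\int_{|y-x|>\delta}|q_n(x,y)|\,c(y)\,dy$, in which the majorant $c$ grows while the dilation sends $u=2^nx$ to infinity. This is exactly what hypothesis (\ref{intcfi}) is designed for: after substituting $v=2^ny$ and using $|q(u,v)|\le\sum_k\Phi(|u-k|)\Phi(|v-k|)$, one is led to integrals of the form $\int_{\mathbb R}\Phi(|t|)\,c(2^{-n}(t+k))\,dt$, and the condition $\int_{\mathbb R}c(ax)\Phi(|x|)\,dx<\infty$ for all $a>0$ (together with the monotonicity and evenness of $c$, which let the bounded shift $k/2^n$ near $x$ be absorbed) gives a bound uniform in $n$ and in $x\in[\alpha,\beta]$; restricting to $|v-u|>2^n\delta$ then makes this contribution vanish as $n\to\infty$. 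Combining the three estimates yields $\limsup_n\sup_{x\in[\alpha,\beta]}|f_n(x)-f(x)|\le\varepsilon M$, and since $\varepsilon>0$ is arbitrary the uniform convergence (\ref{fmtof}) follows.
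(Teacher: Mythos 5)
You should know first that the paper itself contains no proof of this theorem: it is stated and then dismissed with the single line that it ``is a modification of Theorem 6.1 in \cite{kozperUMJ}'', so there is no internal argument to compare yours against. Your route --- writing the partial sum as integration against the scaling kernel $q_n(x,y)=2^n q(2^nx,2^ny)$ and running an approximate-identity argument with a near/far split --- is the classical projection-kernel proof (in the spirit of \cite{har}), and its analytic core is sound: the uniform bound $\sup_u\int_{\mathbb R}|q(u,v)|\,dv<\infty$, the uniform vanishing of $\int_{|v-u|>2^n\delta}|q(u,v)|\,dv$, and the use of hypothesis (\ref{intcfi}) (with evenness and monotonicity of $c$ absorbing the shifts $k/2^n$) to kill the growth term are all correct, modulo the cosmetic point that $c(x)\le c(\beta)$ should read $c(x)\le c(\max(|\alpha|,|\beta|))$.

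There is, however, one genuine gap, and it sits at your very first step. The theorem assumes only $|f(x)|\le c(x)$ with $c$ increasing and $c>1$, so $f$ need not belong to $L_2(\mathbb R)$ (polynomially growing $f$ is allowed). Consequently ``the orthogonal projection of $f$ onto $V_n$'' is undefined, and the Hilbert-space decomposition $V_n=V_0\oplus W_0\oplus\cdots\oplus W_{n-1}$ cannot be applied to $f$ to identify the partial sum (\ref{fmtof}) with $\int_{\mathbb R}q_n(x,y)f(y)\,dy$. What you need instead is the pointwise kernel identity
\[
\sum_{k\in\mathbb Z}\phi(u-k)\overline{\phi(v-k)}+\sum_{k\in\mathbb Z}\psi(u-k)\overline{\psi(v-k)}=2\sum_{k\in\mathbb Z}\phi(2u-k)\overline{\phi(2v-k)},
\]
which encodes $V_{j+1}=V_j\oplus W_j$ at the level of kernels; it follows from the two-scale relations and the identity $|m_0(\xi)|^2+|m_0(\xi+\pi)|^2=1$, with all rearrangements justified by the absolute convergence that Assumption S supplies (note that decay of $\psi$ is inherited from $\Phi$ via Theorem~\ref{KozPerNerPsi}). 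Once this identity is in place, integrating it against $f$ --- Fubini being licensed by (\ref{intcfi}) --- yields your representation of $f_n$, and the rest of your argument goes through. A smaller point of the same kind: the reproducing property $\int_{\mathbb R}q(u,v)\,dv=1$ is a theorem, not an axiom; it requires $|\widehat\phi(0)|=1$ and $\widehat\phi(2\pi k)=0$ for $k\ne 0$, whence $\sum_k\phi(u-k)=\widehat\phi(0)$ a.e.\ by Poisson summation. These facts do hold for a scaling function satisfying the paper's assumptions, but they should be proved or cited rather than listed among ``the wavelet axioms''.
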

This theorem is a modification of  Theorem 6.1 in {\rm\cite{kozperUMJ}}.

\begin{cor}\label{unconfcomsup} Let the $f$-wavelet
$\phi$ be a bounded function with a compact support and $f(x), x\in\mathbb R,$ be a
continuous function on $\mathbb R.$ Then $f_n(x)\to f(x)$ uniformly on each interval $ [\alpha, \ \beta],$ when
$n\to\infty.$
\end{cor}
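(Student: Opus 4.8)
The plan is to obtain the statement as a direct application of Theorem~\ref{UnifConvFunc}, the only work being the verification of its hypotheses. Denote by $[-a_0,a_0]$ the support of $\phi$. As observed in the proof of Corollary~\ref{KozPerCompPsi}, Assumption~S then holds for $\phi$ with the decreasing, compactly supported majorant
$$\Phi_\phi(x)=\begin{cases}\displaystyle\sup_{t\in[-a_0,a_0]}|\phi(t)|,& x\in[-a_0,a_0],\\ 0,& x\notin[-a_0,a_0].\end{cases}$$
I would use this $\Phi_\phi$ in the role of $\Phi$, and the decisive structural point is that $\Phi_\phi$ vanishes outside the compact set $[-a_0,a_0]$.

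Next I would construct a majorant $c(\cdot)$ for $f$ meeting the requirements of Theorem~\ref{UnifConvFunc}. Since $f$ is continuous on $\mathbb R$, the choice
$$c(x)=2+|x|+\max_{|t|\le |x|}|f(t)|$$
is well defined (the maximum being attained on the compact set $[-|x|,|x|]$), continuous, even, strictly increasing on $[0,\infty)$, and satisfies $c(x)>1$ and $|f(x)|\le c(x)$ for all $x\in\mathbb R$. The running maximum of a continuous function is again continuous, so $c$ is continuous.

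Then I would check the integrability condition~(\ref{intcfi}). Fix $a>0$. Because $\Phi_\phi(|x|)=0$ for $|x|>a_0$,
$$\int_{\mathbb R}c(ax)\,\Phi_\phi(|x|)\,dx=\int_{-a_0}^{a_0}c(ax)\,\Phi_\phi(|x|)\,dx<\infty,$$
since on the compact interval $[-a_0,a_0]$ the factor $c(ax)$ is continuous, hence bounded, and $\Phi_\phi$ is bounded. Thus~(\ref{intcfi}) holds for every $a>0$. This is precisely the step that uses the compact support of $\phi$: it removes any need to balance growth of $f$ against decay of $\Phi$, so mere continuity of $f$ suffices and no growth restriction on $f$ is required.

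Finally, to match the local hypothesis of Theorem~\ref{UnifConvFunc}, given an arbitrary segment $[\alpha,\beta]$ I would enclose it in an open interval on which $f$ is continuous — possible for every segment because $f$ is continuous on all of $\mathbb R$. Theorem~\ref{UnifConvFunc} then yields $f_n(x)\to f(x)$ uniformly on $[\alpha,\beta]$, as claimed. The argument is short; the only genuine obstacle is recognizing that compact support of the wavelet reduces~(\ref{intcfi}) to a finiteness statement over a bounded interval, after which the majorant $c$ can be built from $f$ by a running maximum.
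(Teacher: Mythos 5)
Your proof is correct and follows essentially the same route as the paper's: both verify Assumption~S for $\phi$ via the compactly supported majorant $\Phi_\phi$ from Corollary~\ref{KozPerCompPsi}, observe that the compact support makes condition~(\ref{intcfi}) automatic, and then apply Theorem~\ref{UnifConvFunc}. The only difference is that you explicitly construct the majorant $c(x)=2+|x|+\max_{|t|\le |x|}|f(t)|$, a step the paper's proof leaves implicit.
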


\begin{proof} The statement of the corollary
follows from Corollary~\ref{KozPerCompPsi} and Theorem~\ref{UnifConvFunc}. The assumption {\rm S} holds true for the $f$-wavelet
$\phi$ with the function
$\Phi_\phi(\cdot)$ (which has a compact support) given in Corollary~\ref{KozPerCompPsi}. Therefore, condition
(\ref{intcfi})  holds true. An application of Theorem~\ref{UnifConvFunc} completes the proof.
\end{proof}
\section{Wavelet representation of random processes}
Let $\{\Omega, \cal{B}, P\}$ be a standard probability space. Let $\mathbf{X}(t),$ $t\in\mathbb R$ be a random process such that
$\mathbf E\mathbf{X}(t)=0\,,$ $\mathbf E|\mathbf{X}(t)|^2<\infty$ for all $t\in\mathbb R.$

We want to construct a representation of the kind~(\ref{2.5}) for
$\mathbf{X}(t)$ with mean-square integrals
$$\xi_{0k}=\int_{\mathbb R}
\mathbf{X}(t)\overline{\phi_{0k}(t)}\,dt,\quad
\eta_{jk}=\int_{\mathbb R}\mathbf{X}(t)\overline{\psi_{jk}(t)}\,dt\,.$$
Consider the approximants $\mathbf{X}_{n}(t)$
of $\mathbf{X}(t)$
defined by
\begin{equation}\label{7}\mathbf{X}_{n}(t)
:=\sum_{k\in \mathbb Z}\xi_{0k}\phi_{0k}(t)+\sum_{j=0}^{n-1}
\sum_{k\in \mathbb Z}\eta_{jk}\psi_{jk}(t)\,.
\end{equation}
In applied wavelet analysis $\mathbf{X}_{n}(t)$ is interpreted as an approximation of $\mathbf{X}(t)$ by a smooth component (the first sum) and $n$ details (the second sum), see \cite{wmtsa}.

Theorem~\ref{TheorMeanSquareComp} below guarantees
 the mean-square
convergence of $\mathbf{X}_{n}(t)$ to $\mathbf{X}(t)$
 in the case when the $f$-wavelet $\phi$ has
a compact support. \vspace{1mm}

\begin{thm}\label{TheorMeanSquareComp}
 Let $\mathbf{X}(t),$ $t\in\mathbb R,$ be
 a random process which covariance function  $R(t,s)$  is continuous on $\mathbb{R}^2.$ Let the $f$-wavelet
$\phi$ and the $m$-wavelet $\psi$ be
 continuous functions and the
$f$-wavelet $\phi$ have
a compact support. Then
 $\mathbf{X}_{n}(t)\to \mathbf{X}(t)$
  in mean square when
   $n\to\infty.$
\end{thm}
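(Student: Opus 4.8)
The plan is to fix $t\in\mathbb R$ and prove $\mathbf E|\mathbf X(t)-\mathbf X_n(t)|^2\to0$ by reducing the whole question to the deterministic uniform convergence already established in Corollary~\ref{unconfcomsup}. Expanding the square,
\[
\mathbf E|\mathbf X(t)-\mathbf X_n(t)|^2=R(t,t)-2\,\mathrm{Re}\,\mathbf E\!\left[\mathbf X_n(t)\overline{\mathbf X(t)}\right]+\mathbf E|\mathbf X_n(t)|^2,
\]
so it suffices to show that $a_n(t):=\mathbf E[\mathbf X_n(t)\overline{\mathbf X(t)}]$ and $b_n(t):=\mathbf E|\mathbf X_n(t)|^2$ both tend to $R(t,t)$. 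First I would record the key consequence of compact support: since $\phi$ has compact support, so does $\psi$ (Corollary~\ref{KozPerCompPsi}), hence for fixed $t$ only finitely many $\phi_{0k}(t)$, and for each level $j$ only finitely many $\psi_{jk}(t)$, are nonzero. Thus $\mathbf X_n(t)$ is a \emph{finite} linear combination of the mean-square integrals $\xi_{0k},\eta_{jk}$, and all interchanges of $\mathbf E$ with these integrals are justified by the standard properties of mean-square integrals. Writing $q_n(t,u):=\sum_k\phi_{0k}(t)\overline{\phi_{0k}(u)}+\sum_{j=0}^{n-1}\sum_k\psi_{jk}(t)\overline{\psi_{jk}(u)}$ for the reproducing kernel of the multiresolution space (it is real and symmetric for the real-valued bases considered), we have $\mathbf X_n(t)=\int_{\mathbb R}\mathbf X(u)\,q_n(t,u)\,du$, and $g\mapsto\int_{\mathbb R} g(u)q_n(t,u)\,du$ is precisely the deterministic approximation operator $g\mapsto g_n$ of (\ref{fmtof}) evaluated at $t$.

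For the cross term, the covariance of a mean-square integral with $\mathbf X(t)$ gives $\mathbf E[\xi_{0k}\overline{\mathbf X(t)}]=\int_{\mathbb R} R(u,t)\overline{\phi_{0k}(u)}\,du$ and $\mathbf E[\eta_{jk}\overline{\mathbf X(t)}]=\int_{\mathbb R} R(u,t)\overline{\psi_{jk}(u)}\,du$; that is, these are exactly the wavelet coefficients of the function $u\mapsto R(u,t)$. Hence $a_n(t)$ equals the $n$-th wavelet approximant of the continuous function $R(\cdot,t)$ evaluated at $t$. Since $\phi$ is bounded with compact support and $R(\cdot,t)$ is continuous, Corollary~\ref{unconfcomsup} yields $a_n(t)\to R(t,t)$; applied to the whole family $\{R(\cdot,v)\}$, which is equicontinuous and uniformly bounded on compacts by continuity of $R$ on $\mathbb R^2$, the same argument gives convergence uniformly in the parameter $v$.

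The real work is $b_n(t)\to R(t,t)$. Iterating the representation above,
\[
b_n(t)=\int_{\mathbb R}\!\int_{\mathbb R} R(u,v)\,q_n(t,u)\,q_n(t,v)\,du\,dv=\int_{\mathbb R} g_n(v)\,q_n(t,v)\,dv,
\]
where $g_n(v):=\int_{\mathbb R} R(u,v)q_n(t,u)\,du$ is the $n$-th approximant of $R(\cdot,v)$ at $t$, so $g_n\to R(t,\cdot)$ uniformly on compacts by the previous paragraph. Thus $b_n(t)=(g_n)_n(t)$, and I would split $b_n(t)-R(t,t)=\big[(g_n)_n(t)-(R(t,\cdot))_n(t)\big]+\big[(R(t,\cdot))_n(t)-R(t,t)\big]$. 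The second bracket tends to $0$ by Corollary~\ref{unconfcomsup} applied to the continuous function $R(t,\cdot)$.

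The hard part is the first bracket, $\int_{\mathbb R}\big(g_n(v)-R(t,v)\big)q_n(t,v)\,dv$, and this is where I expect the main obstacle. Compact support confines the kernel $q_n(t,\cdot)$ to a fixed neighbourhood $[t-c,t+c]$ independent of $n$, on which the integrand $g_n-R(t,\cdot)$ is uniformly small; however the $L_1$-norm of the wavelet Dirichlet kernel $q_n(t,\cdot)$ grows with $n$, so a crude absolute bound fails. I would therefore control this term by exploiting the cancellation inside the detail kernels rather than by absolute values: splitting $q_n$ into its scaling part and the level kernels $\sum_k\psi_{jk}(t)\overline{\psi_{jk}(\cdot)}$, the scaling contribution vanishes because it pairs a fixed kernel with a uniformly small function, while the detail contributions are handled using the localization of $\psi_{jk}(t)\overline{\psi_{jk}(\cdot)}$ around $t$ together with the equicontinuity of $\{R(u,\cdot)\}$ inherited from continuity of $R$ on $\mathbb R^2$. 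Once $b_n(t)\to R(t,t)$ is secured, the opening identity gives $\mathbf E|\mathbf X(t)-\mathbf X_n(t)|^2\to0$, completing the proof.
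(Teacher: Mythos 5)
Your overall strategy is the same as the paper's: expand $\mathbf E|\mathbf X(t)-\mathbf X_n(t)|^2$, identify the cross term with the $n$-th deterministic wavelet approximant of $R(\cdot,t)$ evaluated at $t$, identify $\mathbf E|\mathbf X_n(t)|^2$ with a twice-applied approximant, and feed everything into Corollary~\ref{unconfcomsup}. The problem is that the step you yourself call ``the hard part'' is left unproven, and the sketch you offer for it does not close. Treating the detail kernels $D_j(t,u)=\sum_k\psi_{jk}(t)\overline{\psi_{jk}(u)}$ level by level, localization plus equicontinuity give, at level $j$, a bound of the form $C\min\bigl(\varepsilon_n,\,\omega(c\,2^{-j})\bigr)$, where $\varepsilon_n=\sup_v|g_n(v)-R(t,v)|$ and $\omega$ is a common modulus of continuity; summed over the $n$ levels this need not tend to $0$ (e.g.\ with $\omega(h)\sim 1/\ln(1/h)$ the sum can even diverge) unless $\omega$ satisfies a Dini-type summability condition $\sum_j\omega(2^{-j})<\infty$, which mere continuity of $R$ does not provide. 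The cancellation $\int_{\mathbb R}D_j(t,u)\,du=0$ (from $\widehat\psi(0)=0$) does not rescue the argument without such an extra assumption. So the proof is incomplete exactly at its crucial point.

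Moreover, the premise that pushed you into this detour is false, and seeing why is what completes the proof. For a compactly supported multiresolution analysis, $q_n(t,u)$ is the kernel of the orthogonal projection onto $V_n=V_0\oplus W_0\oplus\dots\oplus W_{n-1}$, so it can be rewritten in the level-$n$ scaling basis: $q_n(t,u)=\sum_k\phi_{nk}(t)\overline{\phi_{nk}(u)}$ with $\phi_{nk}(x)=2^{n/2}\phi(2^nx-k)$. Hence $\int_{\mathbb R}|q_n(t,u)|\,du\le\sum_k|\phi_{nk}(t)|\,\|\phi_{nk}\|_{L_1}\le(2a+1)\|\phi\|_\infty\|\phi\|_{L_1}$, uniformly in $n$ and $t$: unlike the Fourier--Dirichlet kernel, wavelet projection kernels have uniformly bounded $L_1$-norms (this is precisely the mechanism behind Theorem~\ref{UnifConvFunc}). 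Consequently the ``crude absolute bound'' you dismissed is exactly what finishes the argument: $\bigl|\int_{\mathbb R}(g_n(v)-R(t,v))\,q_n(t,v)\,dv\bigr|\le\sup_{|v-t|\le c}|g_n(v)-R(t,v)|\cdot\int_{\mathbb R}|q_n(t,v)|\,dv\to0$, using the fixed compact support of $q_n(t,\cdot)$ and your uniform convergence $g_n\to R(t,\cdot)$. For comparison, the paper handles this step differently and more tersely: it expands $\mathbf E|\mathbf X_n(t)|^2$ over the basis functions that are nonzero at the fixed point $t$ and passes to the limit using the uniform convergence $\mathbf E\mathbf X(t)\overline{\mathbf X_n(u)}\to R(t,u)$; your more explicit kernel formulation is fine, but it becomes rigorous only once you add the MRA identity above.
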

\begin{proof} By Corollary~\ref{KozPerCompPsi} the $m$-wavelet $\psi$ has
a compact support.
 Let us show that the integrals, which define
 $\xi_{0k}$ and $\eta_{jk},$ exist and belong to
$L_2(\Omega).$ We only consider the integral
$\eta_{jk}=\int_{\mathbb
R}\mathbf{X}(t)\overline{\psi_{jk}(t)}\,dt$  as for  $\xi_{0k}$ the analysis is similar. First we show that $\eta_{jk}$ has a finite second moment. By the definition of $\eta_{jk}$ we get
\begin{equation}\label{e2}\mathbf E|\eta_{jk}|^2=\int_{\mathbb R}\int_{\mathbb R}\mathbf E
\mathbf{X}(t)\overline{\mathbf{X}(s)}
\overline{\psi_{jk}(t)}\psi_{jk}(s)\,dtds= \int_{\mathbb
R}\int_{\mathbb R}R(t,s) \overline{\psi_{jk}(t)}\psi_{jk}(s)\,dtds.
\end{equation}
The last integral in (\ref{e2}) exists, because $R(t,s)$ and $\psi_{jk}(t)$ are continuous and $\psi_{jk}(t)$ has
a compact support. Therefore $\eta_{jk}$ is correctly defined. 

Note that for a fixed $t$
the sums $\sum_{k\in \mathbb Z}\xi_{0k}\phi_{0k}(t)$ and $\sum_{k\in
\mathbb Z}\eta_{jk}\psi_{jk}(t)$ have finite numbers of terms, because, by (\ref{1}), $\phi_{0k}(t)$ and $\psi_{jk}(t)$ vanish for sufficiently large $k.$

Now we show that $\mathbf E|\mathbf X_n(t)-\mathbf X(t)|^2\to 0$, when
$n\to\infty$. If we rewrite
$$\mathbf E|\mathbf X_n(t)-\mathbf X(t)|^2=\mathbf E|\mathbf X(t)|^2
-\mathbf E\mathbf{X}(t)\overline{\mathbf X_n(t)}-\mathbf
E\overline{\mathbf X(t)}\mathbf X_n(t)+\mathbf E|\mathbf
X_n(t)|^2,$$ then it is enough to prove that
  $\mathbf
E\mathbf{X}(t)\overline{\mathbf{X}_n(t)}\to R(t,t)=\mathbf
E|\mathbf{X}(t)|^2$ and
 $\mathbf E|\mathbf{X}_n(t)|^2\to R(t,t),$ when  $n\to\infty$.

It is easy to see that
$$\mathbf
E\mathbf{X}(t)\overline{\mathbf{X}_n(v)}=\sum_{k\in \mathbb
Z}\int_{\mathbb R}R(t,u)\overline{\phi_{0k}(u)}\,du\cdot\phi_{0k}(v)+
\sum_{j=0}^{n-1}\sum_{k\in \mathbb Z}\int_{\mathbb
R}R(t,u)\overline{\psi_{jk}(u)}\,du\cdot\psi_{jk}(v)=:R_n(t,v).$$

We consider sum (\ref{fmtof}) where instead of the function $f_n(\cdot)$ we
use $R_n(t,\cdot).$ Note, that $R_n(t,\cdot)$ is a continuous function of the second argument. By Corollary~\ref{unconfcomsup} for each fixed $t$  we get $R_n(t,v)\to R(t,v)$
uniformly in $v$ on each interval $ [\alpha, \
\beta],$ when $n\to\infty.$

By (\ref{7}) we obtain
$$\mathbf
E|\mathbf{X}_n(t)|^2=\sum_{k\in \mathbb Z}\int_{\mathbb R}\mathbf
E\mathbf{X}(t)\overline{\mathbf{X}_n(u)\phi_{0k}(u)}\,du\cdot\phi_{0k}(t)+
\sum_{j=0}^{n-1}\sum_{k\in \mathbb Z}\int_{\mathbb R}\mathbf
E\mathbf{X}(t)\overline{\mathbf{X}_n(u)\psi_{jk}(u)}\,du\cdot\psi_{jk}(t).$$
There is a finite number of non-zero terms in the sums above.
Using $\mathbf E\mathbf{X}(t)\overline{\mathbf{X}_n(u)}\to R(t,u)$
uniformly in $u$ on each interval, when $n\to\infty,$ we conclude that
$$\mathbf
E|\mathbf{X}_n(t)|^2\to\sum_{k\in \mathbb Z}\int_{\mathbb
R}R(t,u)\overline{\phi_{0k}(u)}\,du\cdot\phi_{0k}(t)+
\sum_{j=0}^{n-1}\sum_{k\in \mathbb Z}\int_{\mathbb
R}R(t,u)\overline{\psi_{jk}(u)}\,du\cdot\psi_{jk}(t)=R(t,t).$$
\end{proof}

\section{Uniform convergence  of wavelet expansions}
The sequence $\mathbf{X}_{n}(t)$ converges to $\mathbf{X}(t)$ uniformly in probability
on each interval $[0,T]$ (in probability in Banach space $C([0,T]),$ see \cite{bulkoz}), if
$$P\left\{\sup_{0\le t\le T} |\mathbf{X}(t)-\mathbf{X}_{n}(t)|>\varepsilon \right\}\to 0,\quad n\to\infty.
$$

 The following theorem is the first main
result of the paper.
\begin{thm}\label{mainuniformproccompnos}
 Let $\mathbf{X}(t),$ $t\in\mathbb R,$ be
 a separable Gaussian random process
such that $\mathbf E\mathbf{X}(t)=0,$ $\mathbf
E|\mathbf{X}(t)|^2<\infty$ for all $t\in\mathbb R,$ and its
covariance function  $R(t,s)$  is continuous on $ \mathbb R^2.$ Let the $f$-wavelet
$\phi$ and the $m$-wavelet $\psi$ be
 continuous functions and the
$f$-wavelet $\phi$ have
a compact support. Suppose that for some
$\alpha>1/{2}:$
\begin{enumerate}
\item[{\rm (i)}] the  integral $\int_{\mathbb R}\ln^{\alpha}(1+|u|)|\widehat{\psi}(u)|\,du$ converges;
\item[{\rm (ii)}] there exist constants $\{c_j,\,j\in \mathbb N_0\},$ such that
 $|\mathbf E\eta_{jk}\overline{\eta_{jl}}|\le c_j$ for all $j\in \mathbb N_0,$ and 
\begin{equation}\label{sumcj}\sum\limits
_{j=1}^{\infty}\sqrt{c_j}\, 2^{\frac
j2}j^{\alpha}<\infty.\end{equation} 
\end{enumerate}

Then $\mathbf{X}_{n}(t)\to \mathbf{X}(t)$ uniformly in probability
on each interval $[0,T]$ when  $n\to\infty.$
\end{thm}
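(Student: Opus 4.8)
The plan is to show that $\mathbf E\sup_{0\le t\le T}|\mathbf X(t)-\mathbf X_n(t)|\to 0$ as $n\to\infty$; since by the Chebyshev inequality $P\{\sup_{0\le t\le T}|\mathbf X(t)-\mathbf X_n(t)|>\varepsilon\}\le \varepsilon^{-1}\,\mathbf E\sup_{0\le t\le T}|\mathbf X(t)-\mathbf X_n(t)|$, this yields the asserted uniform convergence in probability. By Theorem~\ref{TheorMeanSquareComp} the smooth part cancels and the remainder is the detail tail, $\mathbf X(t)-\mathbf X_n(t)=\sum_{j\ge n}W_j(t)$ in $L_2(\Omega)$, where $W_j(t):=\sum_{k\in\mathbb Z}\eta_{jk}\psi_{jk}(t)$; this is again a separable, centered Gaussian process. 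Since assumption~(ii) controls only the \emph{within-level} covariances $\mathbf E\eta_{jk}\overline{\eta_{jl}}$ and says nothing about cross-level ones, I would decouple the scales by the triangle inequality for the supremum, $\mathbf E\sup_{t}|\sum_{j\ge n}W_j(t)|\le \sum_{j\ge n}\mathbf E\sup_{t}|W_j(t)|$, and estimate each level separately.

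For a fixed level $j$ the process $W_j$ is Gaussian, so its supremum is governed by its canonical semimetric $d_j(t,s):=(\mathbf E|W_j(t)-W_j(s)|^2)^{1/2}$. Expanding the second moment and bounding $|\mathbf E\eta_{jk}\overline{\eta_{jl}}|\le c_j$ by~(ii) gives $d_j(t,s)^2\le c_j\bigl(\sum_{k}|\psi_{jk}(t)-\psi_{jk}(s)|\bigr)^2$. By Corollary~\ref{KozPerCompPsi} and Remark~\ref{rema1} the $m$-wavelet $\psi$ has compact support, so for $t,s$ in a fixed interval only boundedly many (say $N$, independent of $j$) indices $k$ contribute. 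Writing the increment through the Fourier transform, $\psi_{jk}(t)-\psi_{jk}(s)=\frac{2^{j/2}}{2\pi}\int_{\mathbb R}e^{iu(2^js-k)}(e^{iu2^j(t-s)}-1)\widehat{\psi}(u)\,du$, and splitting the integral at $|u|=1/(2^j|t-s|)$ via $|e^{i\theta}-1|\le\min(2,|\theta|)$, I would derive a logarithmic modulus of continuity: the high-frequency contribution is controlled by~(i) through the estimate $\int_{|u|>A}|\widehat{\psi}(u)|\,du\le \ln^{-\alpha}(1+A)\int_{\mathbb R}\ln^{\alpha}(1+|u|)|\widehat{\psi}(u)|\,du$. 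This gives $d_j(t,s)\le C\sqrt{c_j}\,2^{j/2}\,\sigma(2^j|t-s|)$ with $\sigma(\delta)\le \mathrm{const}\cdot\ln^{-\alpha}(1+1/\delta)$ for small $\delta$, together with the uniform bound $d_j(t,s)\le C\sqrt{c_j}\,2^{j/2}$.

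With this semimetric in hand I would invoke an entropy (Dudley-type) bound for the supremum of a separable Gaussian process from~\cite{bulkoz}, namely $\mathbf E\sup_{t}|W_j(t)|\le C\int_0^{D_j}\sqrt{\ln N(\varepsilon)}\,d\varepsilon$, where $D_j\le C\sqrt{c_j}\,2^{j/2}$ is the $d_j$-diameter of $[0,T]$ and $N(\varepsilon)$ is the covering number. Because $\psi_{jk}$ oscillates at resolution $2^{-j}$, covering $[0,T]$ forces $\ln N(\varepsilon)\sim j\ln 2+(\mathrm{const}\cdot\sqrt{c_j}\,2^{j/2}/\varepsilon)^{1/\alpha}$, the first term reflecting the dyadic scale and the second the logarithmic modulus. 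The hypothesis $\alpha>1/2$ makes the exponent $1/(2\alpha)<1$, so that $\int_0^{D_j}\varepsilon^{-1/(2\alpha)}\,d\varepsilon$ converges near $0$; this is precisely the role of~(i). Carrying out the integration then produces a per-level bound of the form $\mathbf E\sup_{t}|W_j(t)|\le C\sqrt{c_j}\,2^{j/2}j^{\alpha}$, the polynomial factor in $j$ arising from the $\sim j\ln 2$ contribution of the scale to the entropy.

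Finally, summing over levels gives $\mathbf E\sup_{t}|\mathbf X(t)-\mathbf X_n(t)|\le C\sum_{j\ge n}\sqrt{c_j}\,2^{j/2}j^{\alpha}$, which is the tail of the convergent series~(\ref{sumcj}) and hence tends to $0$ as $n\to\infty$, completing the argument. I expect the main obstacle to lie in the second and third steps: extracting a sharp logarithmic modulus of continuity for $\psi_{jk}$ from the single integrability hypothesis~(i) via the Fourier split, and then tracking exactly how the dyadic scale $2^j$ enters the entropy integral so that the per-level estimate matches the summability weight $\sqrt{c_j}\,2^{j/2}j^{\alpha}$ in~(\ref{sumcj}). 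Justifying the interchange of summation, expectation and supremum in the decoupling step, and the finiteness of the expected suprema, is a routine but necessary technical point.
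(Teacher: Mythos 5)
Your proposal is correct in substance, but it takes a genuinely different route from the paper. The paper never estimates $\mathbf E\sup_{0\le t\le T}|\mathbf X(t)-\mathbf X_n(t)|$: it invokes the convergence criterion of Theorem~\ref{kozsliconv} (from \cite{kozol1}) together with Remark~\ref{sigma}, by which it suffices to prove pointwise mean-square convergence (supplied by Theorem~\ref{TheorMeanSquareComp}) plus one modulus-of-continuity bound, uniform in $n$, of the form (\ref{nerEXmln}) for the detail sums $\widehat{\mathbf X}_n$. That bound is obtained by the same level-by-level decoupling you use, but the within-level sum $\sum_k|\psi_{jk}(t)-\psi_{jk}(s)|$ is controlled by an interpolation trick (Lemma~\ref{lemsumpsi}: write $|\cdot|=|\cdot|^{1-\gamma}|\cdot|^{\gamma}$, bound the first factor by the Fourier estimate of Lemma~\ref{lem2Nerphiln} and the second by the compact-support bound $L_\gamma$ of Lemma~\ref{lem1NerLgamma}), and the exponent $\beta=\alpha(1-\gamma)>1/2$ is arranged by taking $\gamma$ small. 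You instead do the chaining yourself at each level: Dudley's entropy bound from \cite{bulkoz} applied to $W_j$ with $\ln N(\varepsilon)\lesssim \ln T+j\ln 2+(\sqrt{c_j}\,2^{j/2}/\varepsilon)^{1/\alpha}$, followed by Markov's inequality. Your route is self-contained (no external criterion theorem), exploits compact support more directly (bounded overlap of the $\psi_{jk}$ replaces the $\gamma$-interpolation and its loss in the exponent), and yields the stronger conclusion $\mathbf E\sup_{0\le t\le T}|\mathbf X-\mathbf X_n|\to 0$ with a per-level factor $\sqrt j$ rather than $j^{\alpha}$; in spirit it is closer to the rate results of Section 6. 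The paper's route, conversely, requires only increment estimates and no entropy computation, the chaining being hidden inside Theorem~\ref{kozsliconv}.

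One detail in your second step needs repair. With the split at $|u|=1/(2^j|t-s|)$ and $\delta:=2^j|t-s|$, the low-frequency part is bounded only by $\delta\int_{|u|\le 1/\delta}|u|\,|\widehat{\psi}(u)|\,du\le\int_{\mathbb R}|\widehat{\psi}(u)|\,du$, a constant, not by $\ln^{-\alpha}(1+1/\delta)$. Split instead at $|u|=\delta^{-1/2}$ (or at $\delta^{-1}\ln^{-\alpha}(e+1/\delta)$): then the low-frequency term is $O(\delta^{1/2})=O(\ln^{-\alpha}(1+1/\delta))$, while the high-frequency term is $O(\ln^{-\alpha}(1+\delta^{-1/2}))=O(\ln^{-\alpha}(1+1/\delta))$ since $\ln(1+\delta^{-1/2})\ge \frac12\ln(1+\delta^{-1})$; so your claimed modulus $\sigma(\delta)\lesssim\ln^{-\alpha}(1+1/\delta)$ does hold. (The paper sidesteps this choice entirely via inequality (\ref{sinln}).) With that fix, and the separability argument you flag for identifying the a.s.\ uniform limit with the pointwise $L_2$ limit, your proof goes through.
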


\section{Proof of the main theorem}
To prove Theorem~\ref{mainuniformproccompnos}
 we need some auxiliary
results.

\begin{thm}\label{kozsliconv}{\rm \cite{kozol1}} Let $\mathbb T =[0,T]$, $\rho(t,s)=|t-s|$. Let $\mathbf{X}_n(t),$ $t\in\mathbb T,$
 be a sequence of Gaussian stochastic processes such that
all  $\mathbf{X}_n(t)$ are separable in $(\mathbb T, \rho)$ and
$$\sup_{n\ge 1}\sup_{|t-s|\le h}\left( \mathbf E|\mathbf{X}_n(t)-\mathbf{X}_n(s)|^2\right)^{1/2}\le \sigma(h)\,,$$
where $\sigma(h)$ is a function, which is monotone increasing in a
neighborhood of the origin and $\sigma(h)\to 0$ when $h\to 0\,.$

Assume that for some $\varepsilon>0$
\begin{equation}\label{8}\int_0^\varepsilon
\sqrt{-\ln\left(\sigma^{(-1)}(h)\right)}\,dh<\infty\,,\end{equation}
where $\sigma^{(-1)}(\cdot)$ is the inverse function of $\sigma(\cdot)$.

 If the processes $\mathbf{X}_n(t)$ converge in probability to the process
 $\mathbf{X}(t)$ for all $t\in \mathbb T$, then $\mathbf{X}_n(t)$ converges  to  $\mathbf{X}(t)$ in the space
$C(\mathbb T).$
\end{thm}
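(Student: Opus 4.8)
The plan is to recognize condition (\ref{8}) as a Dudley-type metric-entropy integral and then to run a chaining argument that is \emph{uniform in} $n$, producing an equicontinuity-in-probability estimate for the whole family $\{\mathbf{X}_n\}$; the assumed pointwise convergence will then be upgraded to convergence in $C(\mathbb T)$ by a standard finite-net argument.

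First I would introduce the canonical pseudometrics $d_n(t,s):=\left(\mathbf E|\mathbf{X}_n(t)-\mathbf{X}_n(s)|^2\right)^{1/2}$ on $\mathbb T$, for which the hypothesis gives the uniform domination $d_n(t,s)\le\sigma(|t-s|)$ for all $n$. Since $\sigma$ is increasing near the origin, $|t-s|\le\sigma^{(-1)}(u)$ forces $d_n(t,s)\le u$, so every $\rho$-ball of radius $\sigma^{(-1)}(u)$ lies inside a $d_n$-ball of radius $u$; hence the covering numbers obey $N(\mathbb T,d_n,u)\le N(\mathbb T,\rho,\sigma^{(-1)}(u))$. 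For the interval $\mathbb T=[0,T]$ one has $N(\mathbb T,\rho,r)\le C_0/r$ for small $r$, so $\ln N(\mathbb T,d_n,u)\le\ln C_0-\ln\sigma^{(-1)}(u)$ and therefore
$$\int_0^\delta\sqrt{\ln N(\mathbb T,d_n,u)}\,du\le\delta\sqrt{\ln C_0}+\int_0^\delta\sqrt{-\ln\sigma^{(-1)}(u)}\,du.$$
Thus condition (\ref{8}) is precisely the statement that this entropy integral is finite, with a bound that is independent of $n$.

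Next I would invoke the entropy (chaining) bound for Gaussian processes (see \cite{bulkoz}): for each $n$ and each $\delta>0$,
$$\mathbf E\sup_{d_n(t,s)\le\delta}|\mathbf{X}_n(t)-\mathbf{X}_n(s)|\le K\int_0^\delta\sqrt{\ln N(\mathbb T,d_n,u)}\,du=:\theta(\delta),$$
where, by the $n$-free entropy bound above, $\theta$ may be taken independent of $n$ and $\theta(\delta)\to0$ as $\delta\to0$. Writing $\omega_n(h):=\sup_{|t-s|\le h}|\mathbf{X}_n(t)-\mathbf{X}_n(s)|$ and using $d_n(t,s)\le\sigma(h)$ for $|t-s|\le h$, this yields $\mathbf E\,\omega_n(h)\le\theta(\sigma(h))\to0$ as $h\to0$, uniformly in $n$; separability of the $\mathbf{X}_n$ is used here to make the suprema measurable. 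A Markov bound (or the sharper Gaussian concentration estimate) then converts the mean bound into the equicontinuity estimate $\sup_n P\{\omega_n(h)>\tau\}\le\theta(\sigma(h))/\tau\to0$ as $h\to0$, for every fixed $\tau>0$.

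Finally I would pass to the limit. Because the $\mathbf{X}_n$ are Gaussian and converge in probability at each point, the convergence of the increments is in $L^2$, so $\mathbf{X}$ is Gaussian with $\mathbf E|\mathbf{X}(t)-\mathbf{X}(s)|^2\le\sigma^2(|t-s|)$; hence $\mathbf{X}$ satisfies the same entropy condition and admits a modification in $C(\mathbb T)$ with its own small modulus $\omega_{\mathbf X}(h)$. Fixing $\tau>0$ and a finite $h$-net $t_1,\dots,t_m$ of $\mathbb T$, I would use
$$\sup_{t\in\mathbb T}|\mathbf{X}_n(t)-\mathbf{X}(t)|\le\max_{1\le i\le m}|\mathbf{X}_n(t_i)-\mathbf{X}(t_i)|+\omega_n(h)+\omega_{\mathbf X}(h),$$
choosing $h$ so small that the last two terms are each below $\tau/3$ with probability at least $1-\eta$ uniformly in $n$, and then letting $n\to\infty$ so the finite maximum tends to $0$ in probability. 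This forces $P\{\sup_t|\mathbf{X}_n(t)-\mathbf{X}(t)|>\tau\}\to0$, i.e. convergence in $C(\mathbb T)$. I expect the main obstacle to be the chaining step producing an $n$-independent modulus of continuity: everything hinges on the single uniform domination $d_n\le\sigma$, which collapses the family's entropy into one $n$-free integral, and on the availability of a Gaussian oscillation bound; handling the merely monotone (non-smooth) $\sigma$ requires phrasing the covering-number comparison through $\sigma^{(-1)}$ rather than through any derivative of $\sigma$.
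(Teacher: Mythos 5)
Your proposal is correct, and it is in fact the right reconstruction: note that the paper itself offers no proof of this theorem --- it is imported verbatim from \cite{kozol1}, where it is established via the Buldygin--Kozachenko entropy machinery for Gaussian processes \cite{bulkoz}, which is exactly the circle of ideas you use. Your key observation, that $N(\mathbb T,d_n,u)\le N(\mathbb T,\rho,\sigma^{(-1)}(u))\lesssim 1/\sigma^{(-1)}(u)$ turns condition (\ref{8}) into an $n$-uniform Dudley entropy integral, is precisely the content of that condition, and the remaining steps are all sound: the chaining bound yields an $n$-free oscillation modulus $\theta(\sigma(h))$, separability secures measurability of the suprema, convergence in probability of the Gaussian increments upgrades to $L^2$ convergence so the limit inherits $\left(\mathbf E|\mathbf X(t)-\mathbf X(s)|^2\right)^{1/2}\le\sigma(|t-s|)$ and hence a continuous separable modification, and the finite-net decomposition closes the argument. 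The only points deserving explicit care, which you flag correctly, are that $\sigma$ is merely monotone near $0$ (so the covering comparison must go through the generalized inverse $\sigma^{(-1)}$, e.g.\ $\sigma^{(-1)}(u)=\sup\{h:\sigma(h)\le u\}$, for small $u$ only) and that the Dudley modulus bound $\mathbf E\sup_{d_n(t,s)\le\delta}|\mathbf X_n(t)-\mathbf X_n(s)|\le K\int_0^\delta\sqrt{\ln N(\mathbb T,d_n,u)}\,du$ is the version needed, both of which are standard; so there is no genuine gap, only a difference in packaging between your Dudley-style phrasing and the cited source's entropy-characteristic formulation.
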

\begin{rmk}\label{sigma} It is easy to check that {\rm (\ref{8})} holds true for
$\sigma(h)=\frac C{\left(\ln\left(e^{\alpha}+ \frac1{h}\right)\right)^{\beta}}$ and $\sigma(h)=C h^{\kappa}$  when $C>0,$ $\beta>1/2,$ $\alpha>0,$ and $\kappa>0.$
\end{rmk}

\begin{lem}\label{lem1NerLgamma}  Let $[-\hat a, \hat a]$ be
a compact support of the function $\Phi_\psi(\cdot),$
$$S_{\gamma}(x):=\sum\limits_{k\in\mathbb Z}
 |\psi(x-k)|^{\gamma}, \ 0<\gamma\le1.$$
Then $\sup\limits_{x\in\mathbb R}S_{\gamma}(x)\le L_{\gamma},$
where \begin{equation}\label{Lgamma}L_{\gamma}=
\begin{cases}
3\Phi_\psi^\gamma(0)+4\int_{1/2}^{\hat a}\Phi_\psi^{\gamma}(t)\,dt,
 & \hat a>\frac12,\\
3\Phi_\psi^\gamma(0), & \hat a<\frac12. \end{cases}\end{equation}
\end{lem}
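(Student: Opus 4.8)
The plan is to replace $|\psi|$ by its nonincreasing, compactly supported envelope and thereby reduce the estimate on $S_\gamma$ to a purely deterministic bound for a sum of integer shifts of a single decreasing function, which I would then control by comparison with an integral.

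First I would invoke Remark~\ref{rema1}: the assumption {\rm S} holds for the $m$-wavelet $\psi$ with the function $\Phi_\psi$, so $|\psi(t)|\le \Phi_\psi(|t|)$ (a.e.), where $\Phi_\psi$ is even, nonincreasing on $[0,\infty)$, and vanishes outside $[-\hat a,\hat a]$. Since $0<\gamma\le 1$, the majorant $h(t):=\Phi_\psi^{\gamma}(|t|)$ retains all three properties, and therefore
$$S_{\gamma}(x)=\sum_{k\in\mathbb Z}|\psi(x-k)|^{\gamma}\le\sum_{k\in\mathbb Z}\Phi_\psi^{\gamma}(|x-k|)=\sum_{k\in\mathbb Z}h(x-k)=:T(x).$$
It thus suffices to bound $\sup_{x}T(x)$. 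Reindexing $k\mapsto k+1$ shows that $T$ has period $1$, and reindexing $k\mapsto -k$ together with the evenness of $h$ shows that $T$ is even; hence it is enough to estimate $T(x)$ for $x\in[0,1/2]$.

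For such $x$ I would isolate the central term and the two tails,
$$T(x)=h(x)+\sum_{k\ge 1}h(x-k)+\sum_{k\ge 1}h(x+k),$$
and note that $h(x)\le\Phi_\psi^{\gamma}(0)$. Since $x\le 1/2$, every argument $k-x$ and $k+x$ with $k\ge 1$ is at least $1/2$, so monotonicity applies in the form: for a nonincreasing $g\ge0$ one has $\sum_{m\ge 0}g(a+m)\le g(a)+\int_a^{\infty}g(s)\,ds$. Applying this with $a=1-x\ge 1/2$ and with $a=1+x\ge 1$, bounding the two leading terms by $\Phi_\psi^{\gamma}(1/2)\le\Phi_\psi^{\gamma}(0)$ and both integrals by $\int_{1/2}^{\infty}\Phi_\psi^{\gamma}$, I obtain
$$T(x)\le 3\,\Phi_\psi^{\gamma}(0)+2\int_{1/2}^{\infty}\Phi_\psi^{\gamma}(s)\,ds.$$
Because $\Phi_\psi$ is supported in $[-\hat a,\hat a]$, the integral equals $\int_{1/2}^{\hat a}\Phi_\psi^{\gamma}(s)\,ds$ when $\hat a>1/2$ and is $0$ when $\hat a<1/2$, so in either case the right-hand side does not exceed $L_\gamma$.

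I expect the only delicate point to be the integral comparison together with the bookkeeping of the finitely many boundary terms: one must apply monotonicity in the correct direction and track precisely which shifted arguments are $\ge 1/2$, since this is what pins down the constants. No deeper obstacle is anticipated, as the compact support of $\Phi_\psi$ makes each series finite and the envelope disposes of the a.e.\ bound on $\psi$. I note that the estimate above already yields coefficient $2$ in front of the integral, so the stated bound with coefficient $4$ follows \emph{a fortiori}; a coarser splitting would reproduce the constant in $L_\gamma$ exactly.
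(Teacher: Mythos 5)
Your proof is correct and follows essentially the same route as the paper's: reduction by periodicity (you additionally exploit evenness to work on $[0,1/2]$), bounding the central terms by $\Phi_\psi^{\gamma}(0)$, and comparing the tail sums with an integral of the monotone envelope $\Phi_\psi^{\gamma}$. Your slightly more careful bookkeeping (keeping the tail arguments $k\pm x$ rather than using the paper's cruder bound $|x-k|\ge |k|/2$) even yields the coefficient $2$ in place of the paper's $4$ in front of $\int_{1/2}^{\hat a}\Phi_\psi^{\gamma}(t)\,dt$, so the stated bound $L_\gamma$ follows a fortiori.
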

\begin{proof}
Since $S_{\gamma}(x)$ is a periodic function with period 1,  it is
sufficient to prove the assertion of the lemma for $x\in[0,1].$

Notice, that for $x\in[0,1]$ and integer $|k|\ge 2$ the inequality
$|x-k|\ge |k|/2$ holds true.  Hence, $\Phi_\psi(|x-k|)\ge
\Phi_\psi\left(|k|/2\right)$ and for $\hat a>\frac12$
$$S_{\gamma}(x)\le \Phi_\psi^{\gamma}(|x|)+\Phi_\psi^{\gamma}(|x+1|)
+\Phi_\psi^{\gamma}(|x-1|)+\sum\limits_{|k|\ge2}\Phi_\psi^{\gamma}\left({|k|}/2\right)$$
$$\le
3\Phi_\psi^{\gamma}(0)+2\sum\limits_{k=2}^{\infty}\,
\int_{k-1}^{k}\Phi_\psi^{\gamma}\left(
t/2\right)\,dt=3\Phi_\psi^{\gamma}(0)+4\int_{\frac
12}^{\hat a}\Phi_\psi^{\gamma}(t)\,dt.$$ For $\hat a\le1$ the statement
is obvious.
\end{proof}

\begin{lem}\label{lem2Nerphiln}  If $\int_{\mathbb R}\ln^{\alpha}(1+|u|)|\widehat{\psi}(u)|\,du<\infty,
$ for some $\alpha>0,$
then for all $x,y\in \mathbb R,$ $j\in \mathbb N_0,$ and
$k\in\mathbb Z:$
$$
|\psi_{jk}(x)-\psi_{jk}(y)|\le 2^{\frac j2}\max(1,j^{\alpha})R_{\alpha}\cdot
\ln^{-\alpha}\left(e^{\alpha}+\frac{1}{|x-y|}\right),
$$ where
 $R_{\alpha}:=\frac{1}{\pi}\int_{\mathbb
R}\ln^{\alpha}\left(e^{\alpha}+|u|+2\right)|\widehat{\psi}
(u)|\,du.$
\end{lem}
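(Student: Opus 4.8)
The plan is to start from the Fourier inversion formula $\psi(x)=\frac1{2\pi}\int_{\mathbb R}e^{iux}\widehat\psi(u)\,du$, which is available pointwise because the hypothesis $\int_{\mathbb R}\ln^{\alpha}(1+|u|)|\widehat{\psi}(u)|\,du<\infty$ together with $\widehat\psi\in L_2(\mathbb R)$ forces $\widehat\psi\in L_1(\mathbb R)$ (on $\{|u|\ge e-1\}$ one has $\ln^{\alpha}(1+|u|)\ge 1$, and on the bounded complement $L_2\subset L_1$); the same observation gives $R_\alpha<\infty$, since $\ln^{\alpha}(e^\alpha+|u|+2)\le C_\alpha\bigl(1+\ln^{\alpha}(1+|u|)\bigr)$ for a suitable $C_\alpha$. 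Writing $h=|x-y|$ and using $\psi_{jk}(x)=2^{j/2}\psi(2^jx-k)$, I get
$$\psi_{jk}(x)-\psi_{jk}(y)=\frac{2^{j/2}}{2\pi}\int_{\mathbb R}e^{-iuk}\bigl(e^{iu2^jx}-e^{iu2^jy}\bigr)\widehat\psi(u)\,du,$$
and since $\bigl|e^{-iuk}(e^{iu2^jx}-e^{iu2^jy})\bigr|=\bigl|e^{iu2^j(x-y)}-1\bigr|$ depends only on $|u|2^jh$,
$$|\psi_{jk}(x)-\psi_{jk}(y)|\le\frac{2^{j/2}}{2\pi}\int_{\mathbb R}\bigl|e^{iu2^jh}-1\bigr|\,|\widehat\psi(u)|\,du.$$

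Next I reduce the statement to a pointwise inequality in the frequency variable. Using the elementary bound $|e^{i\theta}-1|\le\min(2,|\theta|)$ with $\theta=u2^jh$ and writing $t=|u|$, it suffices to prove that for all $t\ge0$, $h>0$, $j\in\mathbb N_0$,
$$\min\bigl(2,\,t2^jh\bigr)\,\ln^{\alpha}\!\Bigl(e^\alpha+\tfrac1h\Bigr)\le 2\max(1,j^{\alpha})\,\ln^{\alpha}(e^\alpha+t+2).$$
Indeed, integrating this against $\tfrac{2^{j/2}}{2\pi}|\widehat\psi(u)|\,du$ and dividing by $\ln^{\alpha}(e^\alpha+1/h)$ yields exactly the asserted bound with the constant $R_\alpha$.

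I then establish the displayed inequality by a threshold argument at $u_0:=t2^{j-1}$, working in the variable $u=1/h$. The function $u\mapsto\ln^{\alpha}(e^\alpha+u)$ is increasing, while $q(u):=\ln^{\alpha}(e^\alpha+u)/u$ is strictly decreasing on $(0,\infty)$, which is a one-line check of the sign of $q'$ using $\ln(e^\alpha+u)\ge\alpha>\alpha u/(e^\alpha+u)$. If $1/h\le u_0$ then the minimum equals $2$ and monotonicity of $\ln^{\alpha}(e^\alpha+\cdot)$ gives a bound of $2\ln^{\alpha}(e^\alpha+u_0)$; if $1/h>u_0$ then the minimum equals $t2^j\cdot h=t2^j/(1/h)$ and $q$ decreasing gives $t2^j\,q(1/h)\le t2^j\,q(u_0)=2\ln^{\alpha}(e^\alpha+u_0)$. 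Thus in both regimes the left-hand side is at most $2\ln^{\alpha}(e^\alpha+t2^{j-1})$, and it remains only to absorb the dilation, i.e. to show $\ln^{\alpha}(e^\alpha+t2^{j-1})\le\max(1,j^{\alpha})\ln^{\alpha}(e^\alpha+t+2)$: for $j\ge1$ this follows from $e^\alpha+t2^{j-1}\le 2^{j-1}(e^\alpha+t+2)$ and $\ln 2\le\ln(e^\alpha+t+2)$, which give $\ln(e^\alpha+t2^{j-1})\le j\ln(e^\alpha+t+2)$, and for $j=0$ it is immediate from $t/2\le t+2$.

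I expect the main obstacle to be precisely this last coupling: controlling the interaction between the dyadic dilation $2^j$ and the logarithmic modulus of continuity so that the price is only the polynomial factor $\max(1,j^{\alpha})$ rather than an exponential one. The decisive device is the common threshold $u_0=t2^{j-1}$, where the two branches of $\min(2,\cdot)$ meet and where the increasing function $\ln^{\alpha}(e^\alpha+\cdot)$ and the decreasing function $q$ are simultaneously controlled; this collapses the estimate to $2\ln^{\alpha}(e^\alpha+t2^{j-1})$, after which the factorization $e^\alpha+t2^{j-1}\le 2^{j-1}(e^\alpha+t+2)$ converts the exponential $2^{j-1}$ inside the logarithm into the additive $(j-1)\ln 2$, and hence into the multiplicative $j^{\alpha}$.
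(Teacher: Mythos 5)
Your proof is correct, and it shares the paper's overall skeleton---pointwise Fourier inversion, a frequency-domain estimate trading the oscillation factor for a ratio of logarithms, then absorption of the dyadic dilation $2^{j-1}$ into $\max(1,j^{\alpha})$---but it handles the key middle step differently. The paper rewrites $|e^{ixu2^j}-e^{iyu2^j}|$ as $2|\sin((x-y)u2^{j-1})|$ and invokes the inequality $|\sin(s/v)|\le \ln^{\alpha}(e^{\alpha}+|s|)/\ln^{\alpha}(e^{\alpha}+|v|)$, which it does not prove but cites from \cite{kozol1}; the dilation is then absorbed via $e^{\alpha}+|u|2^{j-1}\le(e^{\alpha}+|u|+2)^{\max(1,j)}$. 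You instead use the cruder bound $|e^{i\theta}-1|\le\min(2,|\theta|)$ and prove the required ratio estimate from scratch by a threshold argument at $u_0=t2^{j-1}$, exploiting that $\ln^{\alpha}(e^{\alpha}+u)$ is increasing while $q(u)=\ln^{\alpha}(e^{\alpha}+u)/u$ is decreasing; your dilation step, $e^{\alpha}+t2^{j-1}\le 2^{j-1}(e^{\alpha}+t+2)$ together with $\ln 2\le\ln(e^{\alpha}+t+2)$, is equivalent to the paper's. What your route buys is self-containedness---your min/monotonicity argument is an elementary substitute for the cited sine inequality---plus an explicit justification, absent from the paper, that $\widehat{\psi}\in L_1(\mathbb R)$ so that pointwise inversion is legitimate and $R_{\alpha}<\infty$; what the paper's route buys is brevity, since the hard pointwise inequality is imported. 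Both arguments yield exactly the constant $R_{\alpha}$. One cosmetic remark: in your second case $q(u_0)$ is undefined when $t=0$, but there the left-hand side vanishes and the inequality is trivial, so nothing is lost.
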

\begin{proof}
Since $\psi(x)=\frac{1}{2\pi}\int_{\mathbb
R}e^{ixu}\hat\psi(u)\,du,$ we have
\begin{equation}\label{intsin}|\psi_{jk}(x)-\psi_{jk}(y)|
\le\frac{2^{\frac j2}}{2\pi}\int_{\mathbb R}
\left|e^{ixu2^j}-e^{iyu2^j}\right||\hat\psi\left(
u\right)|\,du=\frac{2^{\frac
j2}}{\pi}\int_{\mathbb R}
\left|\sin((x-y)u2^{j-1})\right||\hat\psi\left( u\right)|\,du.
\end{equation}
Note that for $v\ne0$ the following inequality holds (see the inequality (8) in \cite{kozol1}):
\begin{eqnarray}\label{sinln}\left|\sin\left(\frac{s}v\right)\right|\le
\frac{\left|\ln\left(e^{\alpha}+|s|\right)\right|^{\alpha}}
{\left|\ln\left(e^{\alpha}+|v|\right)\right|^{\alpha}}\,.
\end{eqnarray}
By~(\ref{intsin}) and~(\ref{sinln}) we obtain
\begin{eqnarray*}|\psi_{jk}(x)-\psi_{jk}(y)|\le
\frac{2^{\frac
j2}}{\pi\ln^{\alpha}\left(e^{\alpha}+\frac1{|x-y|}\right)}\int_{\mathbb
R} {\ln^{\alpha}\left(e^{\alpha}+|u|2^{j-1}\right)}|\hat\psi\left( u\right)|\,du.
\end{eqnarray*}
The assertion of the lemma follows from the inequality
\begin{eqnarray*}e^{\alpha}+\frac
{|u|2^j}2\le  e^{\alpha
{\max(1,j)}}+2^{\max(1,j)}\left(\frac {|u|+2}2\right)^{\max(1,j)}\le \left(e^{\alpha}+
{|u|+2}\right)^{\max(1,j)},\ j\in \mathbb N_0.
\end{eqnarray*}
\end{proof}

\begin{lem}\label{lemsumpsi} If the $m$-wavelet $\psi(x)$
satisfies the assumptions of Lemmata~{\rm\ref{lem1NerLgamma}}
 and~{\rm\ref{lem2Nerphiln}}, then for all $\gamma\in[0,1),$
$\alpha>0,$ $x,y\in \mathbb R,$ $j\in \mathbb N_0,$ and
$k\in\mathbb Z$ the following inequality holds true
\begin{equation}\label{sumpsijk-}\sum\limits_{k\in\mathbb
Z}|\psi_{jk}(x)-\psi_{jk}(y)|\le
2^{\frac{j}{2}+1}R^{1-\gamma}_\alpha
L_{\gamma}\,{\max(1,j^{\alpha(1-\gamma)})}\ln^{-\alpha(1-\gamma)}\left(e^{\alpha}+\frac{1}{|x-y|}\right).
\end{equation}
\end{lem}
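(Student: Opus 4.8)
The plan is to interpolate between the two preceding lemmata by a H\"older-type splitting of each summand. Neither lemma suffices on its own: Lemma~\ref{lem2Nerphiln} supplies the decay in $|x-y|$ through the factor $\ln^{-\alpha}(e^{\alpha}+1/|x-y|)$, but its bound is uniform in $k$, so summing it over all $k\in\mathbb Z$ would diverge; Lemma~\ref{lem1NerLgamma} supplies summability in $k$, but carries no smallness as $|x-y|\to0$. The idea is to write, for $\gamma\in[0,1)$,
$$
|\psi_{jk}(x)-\psi_{jk}(y)|
=|\psi_{jk}(x)-\psi_{jk}(y)|^{1-\gamma}\cdot|\psi_{jk}(x)-\psi_{jk}(y)|^{\gamma},
$$
and to estimate the two factors by the two different lemmata.

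First I would bound the $(1-\gamma)$ factor uniformly in $k$ using Lemma~\ref{lem2Nerphiln}, obtaining
$$
|\psi_{jk}(x)-\psi_{jk}(y)|^{1-\gamma}
\le\bigl(2^{j/2}\max(1,j^{\alpha})R_{\alpha}\bigr)^{1-\gamma}
\ln^{-\alpha(1-\gamma)}\!\left(e^{\alpha}+\frac{1}{|x-y|}\right),
$$
whose right-hand side no longer depends on $k$. Since $(\max(1,j^{\alpha}))^{1-\gamma}=\max(1,j^{\alpha(1-\gamma)})$, this already produces the $j$-dependent logarithmic factor appearing in the claim.

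Next I would handle the $\gamma$ factor. By the triangle inequality together with the subadditivity inequality $(a+b)^{\gamma}\le a^{\gamma}+b^{\gamma}$ (valid for $a,b\ge0$, $0\le\gamma\le1$),
$$
|\psi_{jk}(x)-\psi_{jk}(y)|^{\gamma}
\le|\psi_{jk}(x)|^{\gamma}+|\psi_{jk}(y)|^{\gamma}.
$$
Recalling $\psi_{jk}(x)=2^{j/2}\psi(2^{j}x-k)$, summing over $k$ and invoking $S_{\gamma}(\cdot)\le L_{\gamma}$ from Lemma~\ref{lem1NerLgamma} yields
$$
\sum_{k\in\mathbb Z}|\psi_{jk}(x)-\psi_{jk}(y)|^{\gamma}
\le 2^{j\gamma/2}\bigl(S_{\gamma}(2^{j}x)+S_{\gamma}(2^{j}y)\bigr)
\le 2\cdot2^{j\gamma/2}L_{\gamma}.
$$

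Finally I would pull the uniform $(1-\gamma)$ bound out of the sum and multiply it by the $\gamma$-sum; collecting the powers of two via $2^{(j/2)(1-\gamma)}\cdot2^{j\gamma/2}=2^{j/2}$ reproduces exactly~(\ref{sumpsijk-}). The only step that genuinely requires care — and the main idea of the proof — is choosing the exponent split into $\gamma$ and $1-\gamma$ so that the two estimates combine to deliver simultaneously the single factor $2^{j/2}$ and the intended power $-\alpha(1-\gamma)$ of the logarithm; the remaining manipulations are routine.
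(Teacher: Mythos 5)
Your proposal is correct and follows essentially the same route as the paper's proof: the same H\"older-type splitting into exponents $1-\gamma$ and $\gamma$, with Lemma~\ref{lem2Nerphiln} bounding the $(1-\gamma)$ factor uniformly in $k$ and Lemma~\ref{lem1NerLgamma} (via subadditivity of $t\mapsto t^{\gamma}$) providing the summability of the $\gamma$ factor. The recombination of the powers of two, $2^{(j/2)(1-\gamma)}\cdot 2^{j\gamma/2+1}=2^{j/2+1}$, matches the paper's computation exactly.
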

\begin{proof}
By Lemma~\ref{lem2Nerphiln} we obtain
\begin{eqnarray}\label{sumpsijk-1}\sum\limits_{k\in\mathbb
Z}|\psi_{jk}(x)-\psi_{jk}(y)|=\sum\limits_{k\in\mathbb
Z}|\psi_{jk}(x)-\psi_{jk}(y)|^{1-\gamma}
\cdot|\psi_{jk}(x)-\psi_{jk}(y)|^{\gamma}\nonumber\\
\le \ln^{-\alpha(1-\gamma)}\left(e^{\alpha}+\frac1{|x-y|}\right)
\left(2^{\frac{j}{2}}
{\max(1,j^{\alpha})}R_\alpha\right)^{1-\gamma}\cdot\sum\limits_{k\in\mathbb
Z}|\psi_{jk}(x)-\psi_{jk}(y)|^{\gamma}.
\end{eqnarray}
By Lemma~\ref{lem1NerLgamma} we get
\begin{equation}\label{sg}\sum\limits_{k\in\mathbb
Z}|\psi_{jk}(x)-\psi_{jk}(y)|^{\gamma}\le
2\sup\limits_{x\in\mathbb R}\sum\limits_{k\in\mathbb
Z}|\psi_{jk}(x)|^{\gamma}=2^{\frac{j\gamma}{2}+1}
\sup\limits_{x\in\mathbb R}\sum\limits_{k\in\mathbb
Z}|\psi(2^jx-k)|^{\gamma}\le  2^{\frac{j\gamma}{2}+1}L_{\gamma}.
\end{equation}
Finally, the upper bound in (\ref{sumpsijk-}) follows from  inequalities (\ref{sumpsijk-1})  and
(\ref{sg}).
\end{proof}
Now we are ready to prove Theorem~\ref{mainuniformproccompnos}.

\begin{proof}
In virtue of Theorem~\ref{kozsliconv} and Remark~\ref{sigma}
  it is
sufficient to prove that $\mathbf{X}_{n}(t)\to \mathbf{X}(t)$ in
mean square and for some $C>0,$ $\beta>1/2,$ and $\alpha>0:$
\begin{eqnarray}\label{nerEXmln}\left(\mathbf{E}\left|\widehat{\mathbf{X}}_{n}(t)
-\widehat{\mathbf{X}}_{n}(s)\right|^2\right)^{\frac12}\le \frac
C{\ln^{\beta}\left(e^{\alpha}+\frac1{|t-s|}\right)},\end{eqnarray}
where
$\widehat{\mathbf{X}}_{n}(t)=
\sum\limits_{j=0}^{n-1}\sum\limits_{k\in\mathbb
Z}\eta_{jk}\psi_{jk}(t).$

We use the fact that the convergence doesn't depend on the first summand in (\ref{7}). Note, that $\widehat{\mathbf{X}}_{n}(\cdot)$ satisfies the conditions of Theorem~\ref{TheorMeanSquareComp}.
Hence, by Theorem~\ref{TheorMeanSquareComp} we get
$\widehat{\mathbf{X}}_{n}(t)\to \mathbf{X}(t)$ in mean square, when $n\to \infty.$

To get (\ref{nerEXmln}) we use the estimate
\begin{eqnarray*}\label{}\left(\mathbf{E}\left|\widehat{
\mathbf{X}}_{n}(t)-
\widehat{\mathbf{X}}_{n}(s)\right|^2\right)^{\frac12} \le
\sum\limits_{j=0}^{n-1}\left(\mathbf E\left|\sum\limits_{k\in\mathbb
Z}\eta_{jk}(\psi_{jk}(t)-\psi_{jk}(s))\right|^2\right)^{1/2}=:
\sum\limits_{j=0}^{n-1}\sqrt{S_j}\,.
\end{eqnarray*}
$S_j$ can be bounded as follows
$$S_j=\sum\limits_{k\in\mathbb Z}\sum\limits_{k\in\mathbb Z}
 \mathbf
 E\eta_{jk}\overline{\eta_{jl}}(\psi_{jk}(t)-\psi_{jk}(s))(\overline{\psi_{jl}(t)}-\overline{\psi_{jl}(s)})
\le c_{j}\left(\sum\limits_{k\in\mathbb Z}
|\psi_{jk}(t)-\psi_{jk}(s)|\right)^2. $$

By Lemma~\ref{lemsumpsi}
$$S_j\le 2^jc_jL^2\,\max(1,j^{2\alpha(1-\gamma)})\ln^{-2\alpha(1-\gamma)}\left(e^{\alpha}+\frac1{|t-s|}\right),$$
where  $\gamma\in (0,1),$ $L:=2R^{1-\gamma}_\alpha L_\gamma<\infty.$

Therefore, we conclude that 
\begin{eqnarray}\label{hatXmNer}\left(\mathbf{E}\left|\widehat{\mathbf{X}}_{n}(t)-
\widehat{\mathbf{X}}_{n}(s)\right|^2\right)^{\frac12}\le \frac
C{\ln^{\alpha(1-\gamma)}\left(e^{\alpha}+\frac1{|t-s|}
\right)},\end{eqnarray} where by condition (ii) of the theorem
$$C:=L\left(\sqrt{c_0}+\sum\limits_{j=1}^{\infty}\sqrt{c_j}2^{\frac
j2}j^{\alpha(1-\gamma)}\right)<\infty.$$

By taking $\alpha>1/2,$ $\gamma$ sufficiently close to 0,  and $\beta=\alpha (1-\gamma),$ we obtain $\beta>1/2,$ which completes the proof of the theorem.
\end{proof}

\section{Convergence rate in the space $C[0, T]$}
In this section we investigate what happens when the number of terms in the approximant~(\ref{7}) becomes large.

\begin{thm}\label{71} Let $\mathbf
Y(t),$ $t\in[0,T]$ be a separable Gaussian random process,
$$\varepsilon_0:=\sup_{0\le t\le T}\left(\mathbf
\mathbf E|\mathbf Y(t)|^2\right)^{1/2}<\infty\,,$$ and
$$\sup\limits_{\scriptsize\begin{array}{c}
                     |t-s|\le \varepsilon\\
                      t,s\in [0, T]
                   \end{array}}\left(\mathbf
                   E|\mathbf
Y(t)-\mathbf Y(s)|^2\right)^{1/2}\le\frac
C{\ln^{\beta}\left(e^{\alpha}+ \frac1{\varepsilon}\right)}=:\sigma(\varepsilon),  \quad
\beta>1/2,\ \alpha>0\,.
$$
Then for $u>8\delta(\varepsilon_{0})$
$$P\left\{\sup_{0\le t\le T} |\mathbf
Y(t)|>u \right\}\le
2\exp\left\{-\frac{\left(u-\sqrt{8u\delta(\varepsilon_0)}\right)^2}
{2\varepsilon_0^2}\right\}\,,$$
where $\nu:=\min \left(\varepsilon_0, \sigma\left(T/2\right)\right)$ and
$$\delta(\varepsilon_0):
=\frac{\nu}{\sqrt2}\left(\sqrt{\ln(T+1)}+
\left({1-\frac{1}{2\beta}}\right)^{-1}
\left(\frac{C}{\nu}\right)^{\frac{1}{2\beta}}\right).$$
\end{thm}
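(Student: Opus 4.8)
The plan is to apply the general entropy bound for the distribution of the supremum of a separable Gaussian process (of Buldygin--Kozachenko type, see \cite{bulkoz}) to $\mathbf{Y}$, and then to reduce the whole problem to an explicit estimate of the metric entropy integral generated by $\sigma(\cdot)$. I would introduce the canonical pseudometric $d(t,s):=\left(\mathbf{E}|\mathbf{Y}(t)-\mathbf{Y}(s)|^2\right)^{1/2}$ on $[0,T]$ and let $N(v)$ denote the minimal number of $d$-balls of radius $v$ needed to cover $[0,T]$. The cited theorem produces an inequality of exactly the stated shape, $P\{\sup_{0\le t\le T}|\mathbf{Y}(t)|>u\}\le 2\exp\{-(u-\sqrt{8u\delta(\varepsilon_0)})^2/(2\varepsilon_0^2)\}$ for $u>8\delta(\varepsilon_0)$, with $\delta(\varepsilon_0)=\frac1{\sqrt2}\int_0^{\varepsilon_0}\sqrt{\ln N(v)}\,dv$; hence it remains only to verify that this integral is bounded by the explicit expression in the statement.

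The first step is to bound $N(v)$. The hypothesis gives $d(t,s)\le\sigma(|t-s|)$ with $\sigma$ increasing, so whenever $|t-s|\le\sigma^{(-1)}(v)$ one has $d(t,s)\le v$; thus every Euclidean interval of radius $\sigma^{(-1)}(v)$ lies in a single $d$-ball of radius $v$, and $[0,T]$ can be covered by at most $T/\sigma^{(-1)}(v)+1$ such balls, i.e. $N(v)\le T/\sigma^{(-1)}(v)+1$. In particular, once $\sigma^{(-1)}(v)\ge T/2$ one ball suffices and $\ln N(v)=0$; since $\sigma^{(-1)}(v)\ge T/2\iff v\ge\sigma(T/2)$, the integrand vanishes for $v\ge\sigma(T/2)$, so the effective upper limit of integration is exactly $\nu=\min(\varepsilon_0,\sigma(T/2))$.

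The second step makes the entropy explicit. Inverting $\sigma(v)=C/\ln^{\beta}(e^{\alpha}+1/v)$ gives $\sigma^{(-1)}(v)=\left(\exp((C/v)^{1/\beta})-e^{\alpha}\right)^{-1}$, whence $-\ln\sigma^{(-1)}(v)=\ln(\exp((C/v)^{1/\beta})-e^{\alpha})\le(C/v)^{1/\beta}$. Using $N(v)\le(T+1)/\sigma^{(-1)}(v)$ on the range where $\sigma^{(-1)}(v)\le1$ yields $\ln N(v)\le\ln(T+1)+(C/v)^{1/\beta}$, and by subadditivity of the square root $\sqrt{\ln N(v)}\le\sqrt{\ln(T+1)}+(C/v)^{1/(2\beta)}$. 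Integrating over $[0,\nu]$ then gives
$$\int_0^{\nu}\sqrt{\ln N(v)}\,dv\le\nu\sqrt{\ln(T+1)}+C^{1/(2\beta)}\int_0^{\nu}v^{-1/(2\beta)}\,dv=\nu\left(\sqrt{\ln(T+1)}+\left(1-\frac1{2\beta}\right)^{-1}\left(\frac{C}{\nu}\right)^{1/(2\beta)}\right),$$
and dividing by $\sqrt2$ reproduces precisely $\delta(\varepsilon_0)$, which finishes the proof.

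The main obstacle is the convergence of $\int_0^{\nu}v^{-1/(2\beta)}\,dv$: it is finite exactly when $1/(2\beta)<1$, that is $\beta>1/2$. This is where the standing assumption on $\beta$ enters, and it is what guarantees a finite $\delta(\varepsilon_0)$ and therefore the exponential tail bound. A secondary, routine technical point is to justify the passage to the upper limit $\nu$ together with the bound $T/\sigma^{(-1)}(v)+1\le(T+1)/\sigma^{(-1)}(v)$ (valid once $\sigma^{(-1)}(v)\le1$), and to check that the values of $v$ near $\nu$, where $\ln N(v)$ is close to $0$, contribute harmlessly; both are straightforward once the entropy estimate is in hand.
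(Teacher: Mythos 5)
Your proposal is correct and follows essentially the same route as the paper: the paper's proof is a one-line citation to Lemmata 1 and 2 of \cite{kozol2}, which are precisely the two ingredients you supply — the Buldygin--Kozachenko entropy-integral bound for the tail of $\sup_{0\le t\le T}|\mathbf Y(t)|$, and the explicit estimate of $\int_0^{\nu}\sqrt{\ln N(v)}\,dv$ for $\sigma(\varepsilon)=C\ln^{-\beta}\left(e^{\alpha}+\frac1{\varepsilon}\right)$, with $\beta>1/2$ entering exactly where you say it does. The minor technical points you flag (the factor handling in $N(v)\le T/\sigma^{(-1)}(v)+1$ versus $(T+1)/\sigma^{(-1)}(v)$ when $\sigma^{(-1)}(v)>1$, and the monotonicity of the tail bound in $\delta$) are indeed routine and do not affect the argument.
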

This result follows from Lemmata 1 and 2 in \cite{kozol2}.

\begin{thm} Let  a separable Gaussian random process
 $\mathbf{X}(t),$ $t\in [0,T],$  the $f$-wavelet $\phi,$
and the corresponding $m$-wavelet $\psi$  satisfy the assumptions of
Theorem~{\rm\ref{mainuniformproccompnos}} for some $\alpha>1/2.$

Then
$$P\left\{\sup_{0\le t\le T} |\mathbf{X}(t)-\mathbf{X}_{n}(t)|>u
 \right\}\le
2\exp\left\{-\frac{(u-\sqrt{8u\delta(\varepsilon_{n})})^2}
{2\varepsilon_{n}^2}\right\}\,,\quad u>8\delta(\varepsilon_{n}),\ n\ge 1,$$ where  $\delta(\cdot),$ $\nu,$ $\sigma,$ $\beta\in (1/2,\alpha),$ and $C=C_n$ are defined in Theorem~{\rm\ref{71}}.  The decreasing sequences $\varepsilon_{n}$ and $C_n$ are determined by {\rm(\ref{C_n})} and
{\rm(\ref{epsilon_n})} respectively in the proof of the theorem.
\end{thm}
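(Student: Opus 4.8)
The plan is to apply Theorem~\ref{71} to the remainder process $\mathbf Y(t):=\mathbf X(t)-\mathbf X_n(t)$, $t\in[0,T]$. By~(\ref{2.5}) and~(\ref{7}) this process is the tail of the wavelet expansion,
$$\mathbf Y(t)=\sum_{j=n}^{\infty}\sum_{k\in\mathbb Z}\eta_{jk}\psi_{jk}(t),$$
the series converging in $L_2(\Omega)$ for each fixed $t$ by Theorem~\ref{TheorMeanSquareComp}. As an $L_2(\Omega)$-limit of finite linear combinations of the jointly Gaussian coefficients $\eta_{jk}$, $\mathbf Y(t)$ is a zero-mean Gaussian process, and it is separable since $\mathbf X$ is separable and $\mathbf X_n$ has continuous sample paths. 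Hence Theorem~\ref{71} is applicable once its two quantitative inputs — a uniform bound $\varepsilon_n$ on $(\mathbf E|\mathbf Y(t)|^2)^{1/2}$ and a logarithmic modulus-of-continuity bound — are supplied. Both are obtained by transplanting the partial-sum estimates from the proof of Theorem~\ref{mainuniformproccompnos} to the infinite tail $j\ge n$.

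For the first input I would use Minkowski's inequality in $L_2(\Omega)$, the bound $|\mathbf E\eta_{jk}\overline{\eta_{jl}}|\le c_j$, and Lemma~\ref{lem1NerLgamma} with $\gamma=1$, which gives
\begin{equation}\label{epsilon_n}
\sup_{0\le t\le T}\left(\mathbf E|\mathbf Y(t)|^2\right)^{1/2}\le L_1\sum_{j=n}^{\infty}2^{j/2}\sqrt{c_j}=:\varepsilon_n,
\end{equation}
where $L_1$ is the constant of Lemma~\ref{lem1NerLgamma} at $\gamma=1$. For the modulus of continuity I would rerun the chain of estimates for $S_j$ from the proof of Theorem~\ref{mainuniformproccompnos}, now applied to $\mathbf Y(t)-\mathbf Y(s)$ via Lemma~\ref{lemsumpsi}, and sum over $j\ge n$; writing $\beta:=\alpha(1-\gamma)$ this yields
\begin{equation}\label{C_n}
\left(\mathbf E|\mathbf Y(t)-\mathbf Y(s)|^2\right)^{1/2}\le\frac{C_n}{\ln^{\beta}\left(e^{\alpha}+\frac{1}{|t-s|}\right)},\qquad C_n:=L\sum_{j=n}^{\infty}2^{j/2}\sqrt{c_j}\,j^{\alpha(1-\gamma)},
\end{equation}
with $L=2R_{\alpha}^{1-\gamma}L_{\gamma}$ as in that proof.

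Next I would fix $\gamma\in(0,1)$ small enough that $\beta=\alpha(1-\gamma)\in(1/2,\alpha)$, which is possible because $\alpha>1/2$. Condition~(\ref{sumcj}) of Theorem~\ref{mainuniformproccompnos} then forces both series in~(\ref{epsilon_n}) and~(\ref{C_n}) to converge, since $j^{\alpha(1-\gamma)}\le j^{\alpha}$, so $\varepsilon_n,C_n<\infty$; being tails of convergent series with nonnegative terms, they are decreasing in $n$ and tend to $0$. The right-hand side of~(\ref{C_n}) is precisely the majorant $\sigma(\varepsilon)=C_n/\ln^{\beta}(e^{\alpha}+1/\varepsilon)$ demanded by Theorem~\ref{71}, so the modulus hypothesis holds verbatim, while $\varepsilon_n$ majorizes the exact value $\varepsilon_0=\sup_t(\mathbf E|\mathbf Y(t)|^2)^{1/2}$ appearing in that theorem. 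Substituting $\varepsilon_0=\varepsilon_n$ and $C=C_n$ into the conclusion of Theorem~\ref{71} then gives exactly the asserted inequality.

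The computation of~(\ref{epsilon_n}) and~(\ref{C_n}) is routine, as it merely repeats the estimates already carried out for the finite partial sums with $\sum_{j=0}^{n-1}$ replaced by $\sum_{j=n}^{\infty}$. The two points that genuinely deserve care are the following. First, one must justify the passage to the infinite tail: that Minkowski's inequality extends to the convergent $L_2(\Omega)$-series and that $\mathbf Y$ inherits separability, both being consequences of the mean-square convergence established in Theorem~\ref{TheorMeanSquareComp}. Second, since Theorem~\ref{71} is stated with $\varepsilon_0$ equal to the \emph{exact} supremum rather than a majorant, replacing it by the larger explicit $\varepsilon_n$ is legitimate only because the bound $2\exp\{-(u-\sqrt{8u\delta(\varepsilon)})^2/(2\varepsilon^2)\}$ is monotone increasing in $\varepsilon$ on the range $u>8\delta(\varepsilon)$; verifying this monotonicity — using that $\beta>1/2$ makes $\delta(\varepsilon)$ increasing — is the one step not already contained in the earlier results, and is where I expect the only real work to lie.
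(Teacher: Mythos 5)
Your proposal is correct and follows essentially the same route as the paper: it identifies the remainder $\mathbf X(t)-\mathbf X_n(t)$ with the tail $\sum_{j\ge n}\sum_{k\in\mathbb Z}\eta_{jk}\psi_{jk}(t)$, bounds its supremum variance via Lemma~\ref{lem1NerLgamma} (giving $\varepsilon_n$) and its modulus of continuity via Lemma~\ref{lemsumpsi} (giving $C_n$), chooses $\gamma$ so that $\beta=\alpha(1-\gamma)>1/2$, and applies Theorem~\ref{71}. The only differences are minor bookkeeping: the paper defines $\varepsilon_n$ as the exact supremum $\sup_t\left(\mathbf E|\Delta_n(t)|^2\right)^{1/2}$ rather than the tail-sum majorant, so it never needs your monotonicity-in-$\varepsilon$ argument (which is nevertheless a legitimate, and arguably more careful, way to justify the statement's ``decreasing sequences'' reading), and your exponent $2^{j/2}$ in $C_n$ is the one actually produced by Lemma~\ref{lemsumpsi} and required for convergence under (\ref{sumcj}), whereas the paper's (\ref{C_n}) prints $2^{j}$.
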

\begin{proof}
Note that
$\Delta_n(t):=\mathbf{X}(t)-\mathbf{X}_{n}(t)=
\sum\limits_{j=n}^{\infty}\sum\limits_{k\in\mathbb Z}
 \eta_{jk}\psi_{jk}(t).$

Similarly to (\ref{hatXmNer}) we  get the inequality
$$
\left(\mathbf{E}\left|\Delta_{n}(t)-
\Delta_n(s)\right|^2\right)^{\frac12}\le \frac
{C_n}{\ln^{\alpha(1-\gamma)}\left(e^{\alpha}+\frac1{|t-s|}
\right)},
$$
where $\alpha>1/2,$ $0<\gamma<1,$
$C_n:=L
\cdot\sum_{j=n}^\infty\sqrt{c_j}\,2^ {j}j^{\alpha(1-\gamma)}.$

By taking  $\beta=\alpha (1-\gamma)$ we can rewrite $C_n$ in terms of $\alpha$ and $\beta$ in the following manner
\begin{equation}\label{C_n}
C_n=2R_\alpha^{\beta/\alpha}L_{1-\beta/\alpha}\cdot\sum_{j=n}^\infty\sqrt{c_j}\,2^ {j}j^{\beta}<\infty.
\end{equation}

Then
\begin{equation}\label{e1}\left(\mathbf
E|\Delta_n(t)|^2\right)^{1/2}=\left(\mathbf
E\left|\sum_{j=n}^{\infty} \sum_{k\in\mathbb
Z}\eta_{jk}\psi_{jk}(t)\right| ^2\right)^{1/2}\le
\sum_{j=n}^{\infty}\left(\mathbf E\left| \sum_{k\in\mathbb
Z}\eta_{jk}\psi_{jk}(t)\right| ^2\right)^{1/2}.\end{equation}

By Lemma~\ref{lem1NerLgamma} we get
\begin{equation}\label{e11}\mathbf E\left| \sum_{k\in\mathbb
Z}\eta_{jk}\psi_{jk}(t)\right|^2\le c_j\left( \sum_{k\in\mathbb
Z}|\psi_{jk}(t)|\right)^2=2^jc_j\left( \sum_{k\in\mathbb
Z}|\psi(2^jt-k)|\right)^2\le 2^j\, c_j\, L_1^2,\end{equation}
where $L_1$ is defined by (\ref{Lgamma}).

Hence, by the assumptions of the theorem, (\ref{e1}), and (\ref{e11}) we obtain
 \begin{eqnarray}\label{epsilon_n}\varepsilon_n:=\sup\limits_{0\le t\le1}\left(\mathbf
E|\Delta_n(t)|^2\right)^{1/2}\le L_1\sum_{j=n}^{\infty}
\sqrt{c_{j}}\,2^{j/2}<\infty. \end{eqnarray}

By taking $\gamma$ sufficiently close to 0 we obtain $\beta>1/2.$
Thus, the statement of the theorem follows from Theorem~{\rm\ref{71}}.
 \end{proof}

\section{Examples}\label{sec7}
In this section we provide examples of wavelets and stationary stochastic processes which satisfy assumption~(\ref{sumcj}) of
Theorem~\ref{mainuniformproccompnos}. Note that the assumptions are simpler than those used in results for stationary processes in \cite{kozol1}.

\noindent\textbf{Example 1}. Let the $f$-wavelet $\phi$ and the $m$-wavelet $\psi$ be  continuous functions and $\phi(\cdot)$ has the compact support $[-a,a].$
Assume that  $\widehat{\psi}$ is a Lipschitz function of order $\varkappa>0,$ i.e.
\begin{equation}\label{lip}|\widehat{\psi}(u)-\widehat{\psi}(v)|\le C|u-v|^{\varkappa}.\end{equation}
 
Let  $\mathbf{X}(t)$ be a centered  stationary stochastic process which covariance function $R(t-s):=\mathbf E\mathbf{X}(t)\overline{\mathbf{X}(s)}$ corresponds to the spectral density $\widehat{R}(\cdot)$  satisfying the following condition
\begin{equation}\label{intkap}\int_{\mathbb R}\left|\widehat{R}(z)\right|\cdot\left|z\right|^{\varkappa}\,dz<\infty.\end{equation}

Using (\ref{1}), the identity
$\label{hatpsijk}\widehat{\psi}_{jk}(z)={2^{-j/2}}{e^{-i\frac k{2^j}z}}\,\widehat{\psi}\left(z/{2^j}\right),$
and Parseval's theorem we obtain
$$|\mathbf E\eta_{jk}\overline{\eta_{jl}}|=\left|\int_{\mathbb R}\int_{\mathbb R}R(u-v)\overline{\psi_{jk}(u)}\,du\,\psi_{jl}(v)\,dv\right|=\left|\int_{\mathbb R}\int_{\mathbb R}\frac{e^{-ivz}}{2 \pi}\widehat{R}(z)\overline{\widehat{\psi}_{jk}(z)}\,dz\,\psi_{jl}(v)\,dv\right|$$
$$\le\frac{2^{j/2}}{2 \pi}\int_{\mathbb R}\left|\widehat{R}(z)\right|\,\left|\overline{\widehat{\psi}_{jk}(z)}\right|\,dz\cdot \int_{\mathbb R}\left|\psi(2^jv-l)\right|\,dv\le \frac{\hat{a}\,\Phi_\psi(0)}{2^{j}
\pi}\int_{\mathbb
R}\left|\widehat{R}(z)\right|\cdot\left|\widehat{\psi}\left(\frac
z{2^j}\right)\right|\,dz,$$
where $\hat{a}$ and $\Phi_\psi(\cdot)$ are given in Remark~\ref{rema1}.

Note that $\widehat{\psi}(0)=0$ by properties of the $m$-wavelet $\psi.$  Hence, by (\ref{lip}) and (\ref{intkap})
$$\left|\mathbf
E\eta_{jk}\overline{\eta_{jl}}\right|\le\frac{\hat{a}\,\Phi_\psi(0)}{\pi\,
2^{j(1+\varkappa)}}\int_{\mathbb
R}\left|\widehat{R}(z)\right|\cdot\left|z\right|^{\varkappa}\,dz,\quad \mbox{for all}\ k,l\in\mathbb Z.$$
Therefore $\sqrt{c_j}\le C/2^{j(1+\varkappa)/2}$ and
assumption~(\ref{sumcj}) is satisfied.

\begin{rmk} By Remark~{\rm\ref{rema1}}, $\psi(\cdot)$ has the compact support $[-\hat{a},\hat{a}]$ and is bounded. Hence,
$$|\widehat{\psi}(u)-\widehat{\psi}(v)|\le \int_{-\hat{a}}^{\hat{a}}\left|e^{-iut}-e^{-ivt}\right||\psi(t)|\,dt\le \Phi_\psi(0)\int_{-\hat{a}}^{\hat{a}}\left|\sin\left(\frac{(u-v)t}{2}\right)\right|\,dt\le \frac{\hat{a}^2\Phi_\psi(0)}{2}|u-v|.$$ 
Therefore,  $\widehat{\psi}$ is always a Lipschitz function (at least of order 1).
\end{rmk}

\noindent\textbf{Example 2}. In this numerical example we use the Daubechies D8 wavelet and the process $\mathbf{X}(t),$ $t\in[0,30],$ with the covariance function $R(t)=e^{-t^2}.$ Realizations of the process and two wavelet reconstructions of a realization are plotted in Figures~1 and~2.  The figures have been generated by the R packages \textsc{geoR} \cite{geoR} and \textsc{wmtsa} \cite{wmtsa}.  Figure~3 shows the maximum absolute approximation errors $\sup_{0\le t\le 30} |\mathbf{X}(t)-\mathbf{X}_{n}(t)|,$ $n=1,2,3,$ for 500 simulated realizations of $\mathbf{X}(t)$ for each $n$. Finally, the mean reconstruction error as a function of the number $n$ of detail terms is given in Figure~4.  We clearly see that empirical probabilities of large errors become smaller when the number of terms in the wavelet expansion increases.
\noindent\begin{figure}
\begin{minipage}{7.5cm}
 \includegraphics[width= 7.5cm,height=6.1cm,trim=0cm 0.5cm 0.5cm 1.6cm, clip=true]{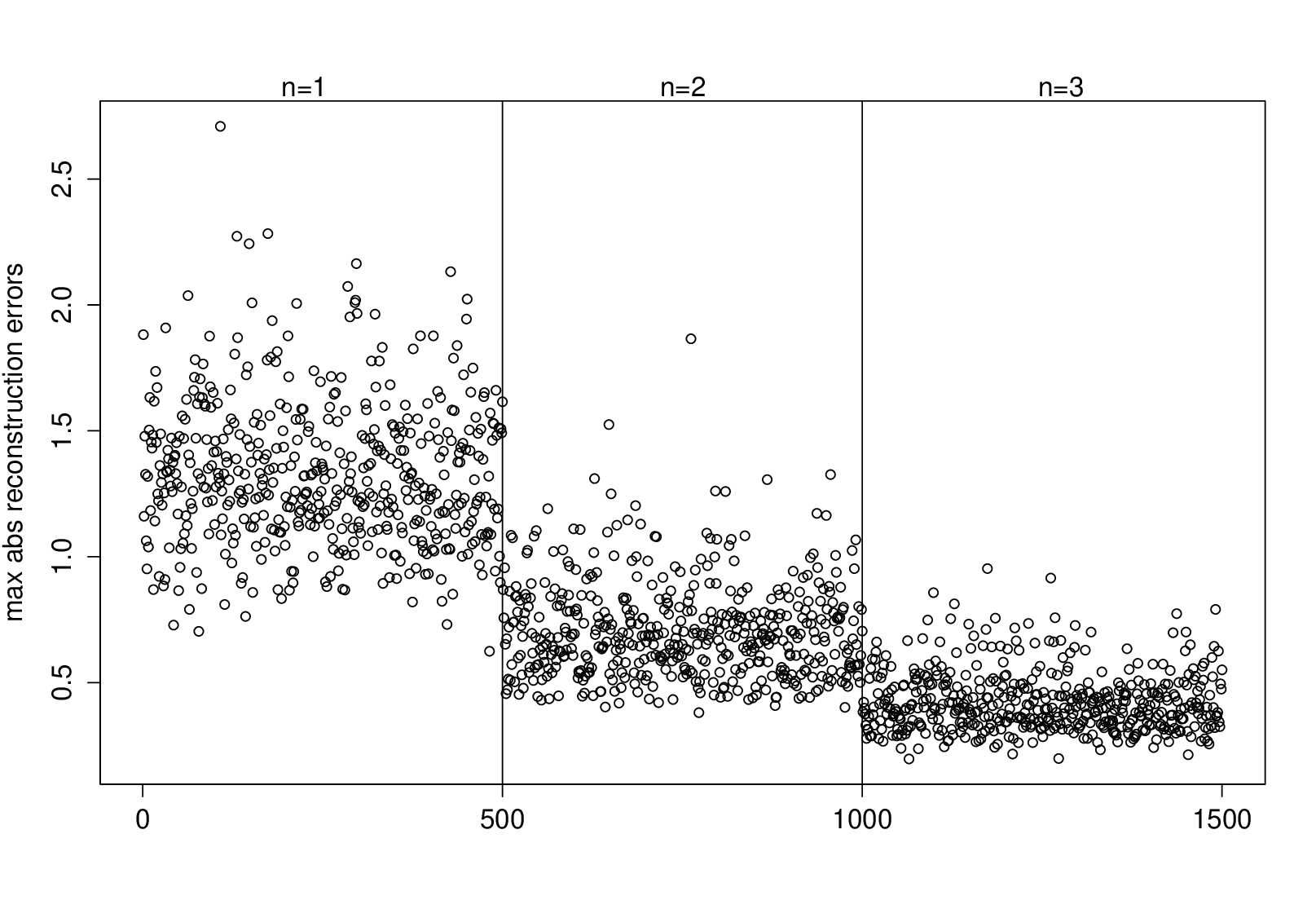}\\
 {\textbf{Fig.\,3}\ \ \small Reconstruction errors for three wavelet approximations}  \end{minipage}\quad\quad
\begin{minipage}{7.5cm}
 \includegraphics[width= 7.5cm,height=6.1cm,trim=0cm 0.5cm 0.5cm 1.6cm, clip=true]{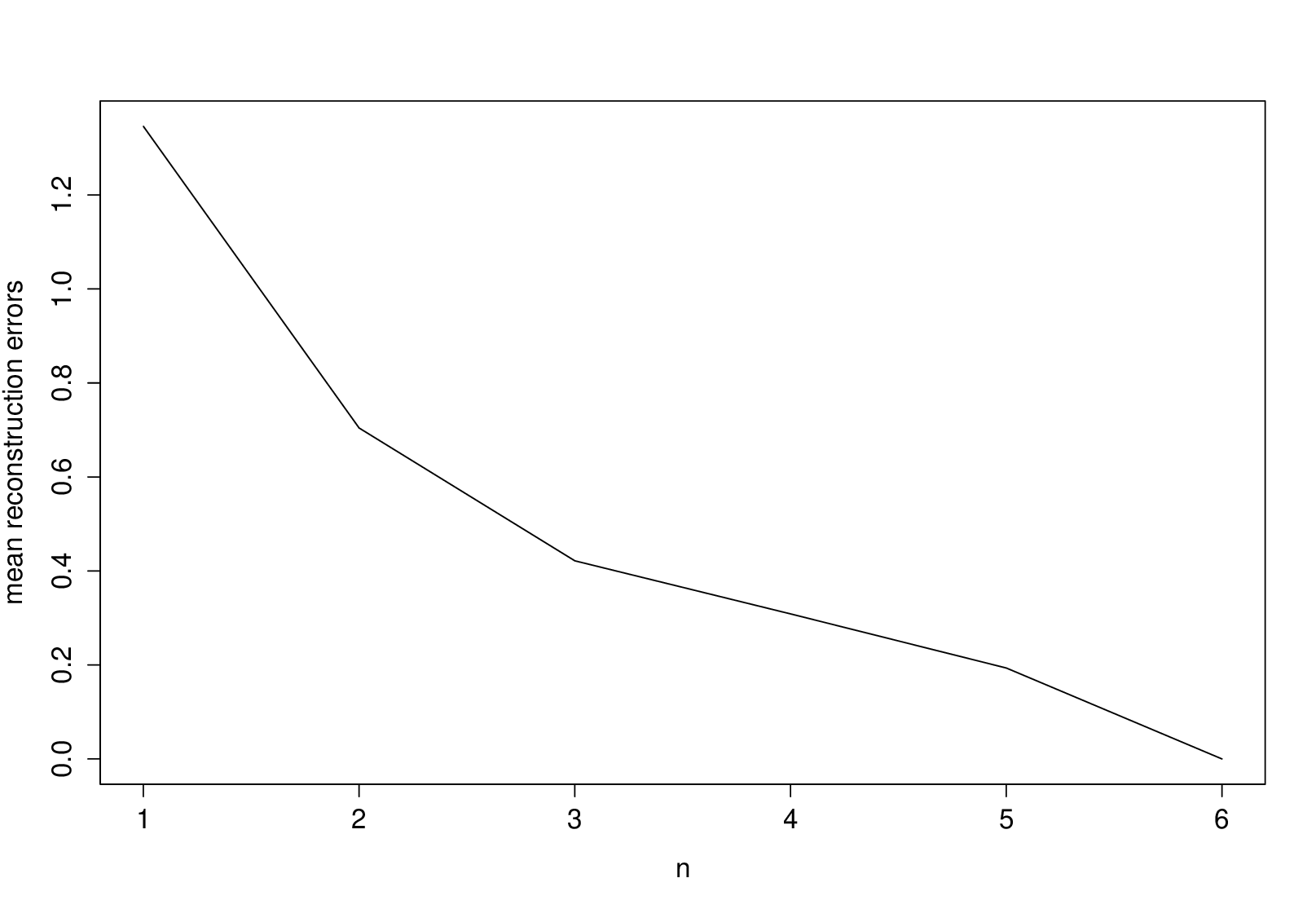}\\
{\textbf{Fig.\,4}\ \ \small  Mean reconstruction error as\\ a function of $n$} \end{minipage}
\end{figure}


\begin{thebibliography}{00}

\bibitem{bal1} Balakrishnan, N.,  Ibragimov, I.A.V.B., Nevzorov, V.B.  (2001). \textit{Asymptotic Methods in Probability and Statistics with Applications.} Boston: Birkhäuser.

\bibitem{bal2} Balakrishnan, N., Melas, V.B., Ermakov, S. (2000). \textit{Advances in Stochastic Simulation Methods.} Boston: Birkhäuser.

\bibitem{bulkoz} Buldygin, V.V.,   Kozachenko, Yu.V.  (2000).   \textit{Metric Characterization of Random Variables and Random Processes}. Providence R.I.: American Mathematical Society. 

\bibitem{cam} Cambanis, S.,  Masry, E. (1994).  Wavelet approximation of deterministic and random signals: convergence properties and rates. \textit{IEEE Trans. Inf. Theory.}  40(4):  1013--1029.

\bibitem{chu} Chui, C.K. (1992).  \textit{An Introduction to Wavelets}. New York: Academic Press.

\bibitem{dau} Daubechies, I. (1992). \textit{Ten Lectures on Wavelets}. Philadelphia: SIAM.

\bibitem{did} Didier, G.,   Pipiras, V. (2008). Gaussian stationary processes: adaptive wavelet decompositions, discrete approximations and their convergence. \textit{J. Fourier Anal. and Appl.} 14: 203--234.

\bibitem{geoR}  \textit{geoR: Analysis of geostatistical data}, available online from http://cran.r-project.org/web/packages/geoR/ (accessed 17 June 2012)

\bibitem{har} H\"{a}rdle, W.,   Kerkyacharian, G.,  Picard, D.,  Tsybakov, A. (1998). \textit{Wavelets, Approximation and Statistical Applications}. New York: Springer.

\bibitem {ist} Istas, J. (1992). Wavelet coefficients of a gaussian process and applications.  \textit{Annales de l'institut Henri Poincar\'{e} (B) Probabilit\'{e}s et Statistiques}  28(4): 537--556.

\bibitem{kozol1}  Kozachenko, Yu.,  Olenko, A.,  Polosmak, O. (2011).  Uniform convergence of wavelet expansions of Gaussian random processes. \textit{Stoch. Anal. Appl.} 29(2):  169--184.

\bibitem{kozol2}  Kozachenko, Yu.,  Olenko, A.,  Polosmak, O. (2012). Convergence rate of wavelet expansions of Gaussian random processes, will appear in \textit{Comm. Statist. Theory Methods.} 


\bibitem{kozperUMJ}  Kozachenko, Yu.,  Perestyuk, M. (2008). On the uniform convergence of wavelet expansions of random processes from Orlicz spaces of random variables. II. \textit{Ukr. Math. J.} 60(6): 876--900.

\bibitem{kur} Kurbanmuradov, O.,  Sabelfeld, K. (2008). Convergence of Fourier-wavelet models for Gaussian random processes. \textit{SIAM J. Numer. Anal.} 46(6):  3084--3112.

\bibitem{wong} Ping Wah  Wong. (1993). Wavelet decomposition of harmonizable random processes. \textit{IEEE Trans. Inf. Theory}. 39(1): 7--18.

\bibitem{wmtsa}  \textit{wmtsa: Wavelet Methods for Time Series Analysis}, available online from http://cran.r-project.org/web/packages/wmtsa/ (accessed 17 June 2012)

\bibitem{zha} Zhang, J.,  Waiter, G. (1994). A wavelet-based KL-like expansion for wide-sense stationary random processes. \textit{IEEE Trans. Signal Proc}. 42(7):  1737--1745.

\end{thebibliography}
\end{document}